%
%
%
%
%
%
%
%
%
\pdfoutput=1

%
%
%

\documentclass{amsart}

\usepackage[bookmarks=true,%
    colorlinks=true,%
    linkcolor=blue,%
    citecolor=blue,%
    filecolor=blue,%
    menucolor=blue,%
    urlcolor=blue,%
    breaklinks=true]{hyperref}

\usepackage{bm}
\usepackage{amsthm,amsmath,amssymb,tikz,tikz-cd}
\usepackage[mathscr]{euscript}
\usepackage{graphicx}

\setlength{\topmargin}{-1.cm}
\setlength{\headsep}{1.6cm}
\setlength{\evensidemargin}{.7cm}
\setlength{\oddsidemargin}{.7cm}
\setlength{\textheight}{21.cm}
\setlength{\textwidth}{15.2cm}






\newcommand{\Id}{\mathrm{Id}}
\newcommand{\C}{\mathbb{C}}
\renewcommand{\H}{\mathbb{H}}
\newcommand{\Q}{\mathbb{Q}}
\newcommand{\Z}{\mathbb{Z}}
\newcommand{\R}{\mathbb{R}}

\newcommand{\SL}{\mathrm{SL}}
\newcommand{\GL}{\mathrm{GL}}

\renewcommand{\qed}{ $\sqcup\!\!\!\!\sqcap$}
\newcommand{\G}{\Gamma}

\newtheorem{theorem}{Theorem}[section]
\newtheorem{lemma}[theorem]{Lemma}
\newtheorem{corollary}[theorem]{Corollary}

\newtheorem{remark}[theorem]{Remark}

\newtheorem{proposition}[theorem]{Proposition}
\newtheorem{question}[theorem]{Question}

\title{Complex hyperbolic 2-orbifolds with isolated singularities}
\author{A. W. Reid \& M. Stover} \thanks{The second author was supported by NSF grant DMS-2203555}

\def\qed{ $\sqcup\!\!\!\!\sqcap$}

\def\PU{\mbox{\rm{PU}}}
\def\SU{\mbox{\rm{SU}}}
\def\U{\mbox{\rm{U}}}

\begin{document}

\begin{abstract} For each prime $p$, this paper constructs compact complex hyperbolic $2$-manifolds with an isometric action of $\Z / p \Z$ that is not free and has only isolated fixed points. The case $p = 2$ is special, and finding general examples for $p=2$ is related to whether or not complex hyperbolic lattices are conjugacy separable on torsion.
\end{abstract}

\maketitle

%
%
%
%
\section{Introduction}
\label{intro}

The purpose of this note is to prove the following result.

\begin{theorem}
\label{there_are_isolated}
For all primes $p$, there is a complex hyperbolic surface $X$ with an action of $\Z/p\Z$ that is not free and has only isolated fixed points.
\end{theorem}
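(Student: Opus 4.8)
The plan is to convert the statement into a problem about lattices and then solve it arithmetically. An isometric, non-free action of $\Z/p\Z$ on a complex hyperbolic surface $X=\Gamma\backslash\mathbf{H}^2_\C$ with only isolated fixed points is, by lifting to $\mathbf{H}^2_\C$, the same datum as a lattice $\Gamma'\subset\PU(2,1)$ together with a torsion-free normal subgroup $\Gamma\trianglelefteq\Gamma'$ with $\Gamma'/\Gamma\cong\Z/p\Z$, subject to: $\Gamma'$ contains nontrivial torsion, but no nontrivial torsion element of $\Gamma'$ fixes a complex geodesic pointwise. Indeed, since $p$ is prime and $\Gamma$ is torsion-free, every nontrivial torsion $t\in\Gamma'$ has order exactly $p$ and is elliptic; lifting $t$ to $\U(2,1)$, its eigenspaces are orthogonal and the Hermitian form has negative index $1$, so $t$ fixes a negative eigenline, and the fixed locus in $\mathbf{H}^2_\C$ is this isolated point if the eigenvalue there is simple and a totally geodesic $\mathbf{H}^1_\C$ if it has multiplicity $2$. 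Thus it suffices to produce a lattice $\Gamma'$ with torsion, every torsion element of which is \emph{regular elliptic} (three distinct eigenvalues in a lift to $\U(2,1)$, hence automatically an isolated fixed point), together with the quotient $\Gamma'\to\Z/p\Z$ with torsion-free kernel. (The action is holomorphic because an anti-holomorphic involution fixes a real surface, not isolated points.)

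For $p$ odd I would build $\Gamma'$ from an arithmetic lattice whose arithmetic datum makes complex reflections of order $p$ impossible while still admitting regular elliptic elements of order $p$. Take a CM field $E$ with $\zeta_p\notin E$: for $p\ge5$ let $F=\Q(\zeta_p)^+\subseteq E$ with $E$ totally imaginary over $F$, so that $x^p-1=(x-1)\prod_{j=1}^{(p-1)/2}\bigl(x^2-(\zeta_p^j+\zeta_p^{-j})x+1\bigr)$ is the factorization into $E$-irreducibles, and choose a Hermitian form $h$ on $E^3$ of signature $(2,1)$ at one real place of $F$ and definite at the others, so that $\Lambda=\U(h)(\mathcal O_E)$ (viewed in $\PU(2,1)$) is a cocompact arithmetic lattice; for $p=3$ one uses instead a degree-three division algebra $D$ over an imaginary quadratic $E$ (with an involution of the second kind and a rank-one Hermitian form), which is cocompact and, being anisotropic, again contains no complex reflection. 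In either case, any order-$p$ element $t\in\Lambda$ has characteristic polynomial over $E$ of the shape $(x-1)\bigl(x^2-(\zeta_p^j+\zeta_p^{-j})x+1\bigr)$ — the complex-reflection shape $(x-1)^2(x-\zeta_p^j)$ is excluded because $\zeta_p^j\notin E$ (and in the division algebra case every subfield has degree $1$ or $3$) — so $t$ has eigenvalues $1,\zeta_p^j,\zeta_p^{-j}$ and is regular elliptic. Such elements exist in $\Lambda$ (e.g.\ $\mathrm{diag}(1,R)$ for an integral companion matrix $R$ of $x^2-(\zeta_p+\zeta_p^{-1})x+1$ and $h$ split accordingly; respectively the image of $\widetilde g\in D^\ast$ with $\widetilde g^{\,3}$ a non-cube unit of $\mathcal O_E$). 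Now apply Selberg's lemma to get a torsion-free normal subgroup $\Gamma\trianglelefteq\Lambda$ of finite index; fixing a regular elliptic $g\in\Lambda$ of order $p$ we have $\langle g\rangle\cap\Gamma=1$, so $\Gamma'=\Gamma\langle g\rangle$ is a (cocompact) lattice with $\Gamma'/\Gamma\cong\Z/p\Z$ and torsion, and every nontrivial torsion element of $\Gamma'$ has order $p$, hence lies in $\Lambda$ and is regular elliptic. This settles all odd $p$.

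The case $p=2$ is the main obstacle and is genuinely harder. An order-two element of $\U(2,1)$ has eigenvalues in $\{\pm1\}$ and therefore can never be regular, so the mechanism above collapses; worse, a lattice of the form $\U(h)(\mathcal O_E)$ essentially always contains an order-two complex reflection — e.g.\ $\pm\mathrm{diag}(1,-1,-1)$ for a diagonal $h$ — whose fixed locus is a complex geodesic, and one cannot remove it by the naive arithmetic. Running the argument requires a cocompact lattice possessing an involution with isolated fixed point but \emph{no} involution fixing a complex geodesic; producing such lattices systematically is exactly the problem of separating the two conjugacy types of involutions in finite quotients, i.e.\ conjugacy separability of complex hyperbolic lattices on torsion, which is why the abstract singles this out. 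For the theorem itself it suffices to exhibit one $p=2$ example, and I would do this by taking a known ball-quotient surface carrying an involution whose quotient has only cyclic quotient singularities — certain fake projective planes, or explicit arithmetic lattices in the literature, are natural candidates, since division-algebra lattices are anisotropic and hence carry no complex reflections — and checking directly that all of its involutions are of isolated type. I expect this verification, rather than the construction for odd $p$, to be where the real work lies.
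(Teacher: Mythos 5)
Your treatment of the odd primes is essentially sound and takes a genuinely different route from the paper's. The paper works inside $E=\Q(\zeta_p)$ (where order-$p$ complex reflections \emph{do} exist in the ambient lattice) and then separates complex reflections from regular elliptic elements in congruence quotients via characteristic polynomials (Lemmas \ref{reduce-regular} and \ref{using_congruence}, using strong approximation and Minkowski); you instead choose the CM field $E$ with $\zeta_p\notin E$, so that order-$p$ complex reflections cannot occur in $\U(h)(R_E)$ at all, and then just apply Selberg's lemma and adjoin one regular elliptic $g$. That is a clean alternative, modulo two repairs: (i) an order-$p$ element of the lattice viewed in $\PU(2,1)$ lifts to a matrix $\widetilde t$ with $\widetilde t^{\,p}$ only a scalar root of unity of $E$, so the eigenvalues need not be $p$-th roots of unity; the conclusion still holds (if $\widetilde t$ had repeated eigenvalue $\lambda$ and simple eigenvalue $\mu$, then $\gcd(\chi_{\widetilde t},\chi_{\widetilde t}')$ forces $\lambda,\mu\in E$ and $\mu/\lambda$ would be a nontrivial $p$-th root of unity in $E$), but you should say this; (ii) the existence of an invariant Hermitian form of signature $(2,1)$ at the identity place and definite elsewhere for $\mathrm{diag}(1,R)$ needs an averaging argument or a citation (the paper invokes \cite{McR} and the explicit diagonal form with $G(p,p,2)$), and your $p=3$ division-algebra element is only a one-line sketch (one must arrange that the cubic field $E(\widetilde g)$ embeds in $D$ compatibly with an involution of the second kind and that $\widetilde g$ is unitary for the chosen Hermitian element).

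The genuine gap is $p=2$: you offer a plan, not a proof, and the plan points at a class of examples that provably cannot work. You propose fake projective planes or other division-algebra-type lattices because they contain no complex reflections, but such lattices contain no noncentral $2$-torsion either: by the paper's Lemma \ref{lem:DivAlgp=1(3)}, elliptic torsion of order $n$ in a division-algebra-type lattice forces $3\mid\varphi(n)$ or $3\mid\varphi(3n)$, which fails for $n=2$ (concretely, automorphism groups of fake projective planes have order dividing $9$ or $21$ and contain no involutions). You give no other concrete lattice, and no method for certifying that every involution in a given simplest-type lattice is a reflection through a point; for order $2$ the two types of involution share the same characteristic polynomial, which is exactly why both the congruence method and your field-of-definition method collapse here. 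The paper resolves this with a specific example: the Deraux--Parker--Paupert lattice $\mathcal S(\overline{\sigma}_4,3)$, arithmetic of simplest type over $k=\Q(\sqrt{21})$, $E=\Q(\sqrt{-3},\sqrt{-7})$, whose finite subgroups are the Shephard--Todd groups $G_4$ and $G_5$; all complex reflections there have order $3$, so every element of order $2$ is a reflection through a point, and the congruence argument of Lemma \ref{using_congruence} then produces the desired $\Z/2\Z$ actions. Without such an example (or a conjugacy-separability-on-torsion statement, which is open), your proposal does not establish the theorem for $p=2$.
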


The existence of such actions was asked of us by J. Lott in the case of $p=2$ in connection with his work on Ricci flow on K\"ahler surfaces (see \cite{Lott}).  The proof of Theorem \ref{there_are_isolated} is relatively straightforward for odd primes, but one does find an intriguing difference between the two constructions of arithmetic lattices, namely those of \emph{simplest} and \emph{division algebra} types. We refer the reader to \S \ref{ssec:DivTor} for restrictions on torsion in arithmetic lattices in $\SU(2,1)$ of division algebra type implying that those lattices can only be used to prove Theorem \ref{there_are_isolated} when $p \equiv 1 \pmod{3}$. The full proof of Theorem \ref{there_are_isolated} is given in \S \ref{proof_main}.

The case $p=2$ is handled through an explicit example whose existence is somewhat fortuitous. This case is more delicate due to the fact that $2$-torsion in lattices with isolated fixed points are not obviously differentiated from complex reflections of order $2$ in finite quotients. See \S \ref{final} for more on this and connections to open questions like conjugacy separability of complex hyperbolic lattices.

\medskip

We close the introduction with a remark and some terminology. If $M$ is a compact complex hyperbolic surface, $F$ is a finite group of isometries acting 
with isolated fixed points, and $X = M / F$, then $X$ is a normal complex surface with cyclic quotient singularities such that the projection $M \to X$ is unbranched (i.e., unramified in codimension one). This equips $X$ with a metric of constant holomorphic sectional curvature $-1$ whose singularities are precisely at the singularities of $X$. It seems reasonable to call such an $X$ a \emph{singular complex hyperbolic surface}, and quotients of fake projective planes give an interesting class of examples (e.g., see \cite{Keum}). It would be interesting to understand these surfaces better, especially those of small volume. In \S \ref{sec:Ex} we provide several concrete examples.\\[\baselineskip]
\noindent{\bf Acknowledgements:}~{\em We are grateful to John Lott for asking the question which led us to prove Theorem \ref{there_are_isolated}, and to the Instituto de Matem\'atica Pura e Aplicada, 
Rio de Janeiro, 
for its hospitality during the conference ``Hyperbolic manifolds, their submanifolds and fundamental groups" where the ideas for this note were fleshed out. The second author is grateful to the Korean Institute for Advanced Study for its support and hospitality during the final preparation of this paper. We are also grateful to a referee for several very helpful comments.} 

%
%
%
%
\section{Complex hyperbolic preliminaries}
\label{prelim}

To make this note relatively self-contained we recall some basic facts about complex hyperbolic lattices and their quotient orbifolds, particularly those which are arithmetic. For further details on complex hyperbolic geometry, see \cite{Goldman}. Throughout, $\H^n_\C$ will denote complex hyperbolic $n$-space, and by a complex hyperbolic lattice we mean a discrete subgroup $\G<\SU(n,1)$ such that $\H^n_\C/\G$ has finite volume. When $\Gamma$ is torsion-free the quotient is a manifold, and otherwise it is an orbifold. When $n=2$ and $\Gamma$ is torsion-free we call $X=\H^2_\C/\G$ a {\em complex hyperbolic surface}.

\subsection{Complex hyperbolic 2-orbifolds}
\label{orbifolds}

This paper will require some basic facts about torsion in complex hyperbolic lattices. An element of $\SU(2,1)$ is \emph{elliptic} if it has a fixed point in $\H^2_\C$. The geometry of an elliptic element's action on $\H^2_\C$ is reflected in its eigenvalues and how the eigenspaces behave under the hermitian form used to define $\SU(2,1)$:
\begin{enumerate}
	\item Suppose $g$ is elliptic with only two distinct eigenvalues.
	\begin{enumerate}
		\item If the $2$-dimensional eigenspace is signature $(1,1)$, then $g$ is a \emph{complex reflection} acting by the identity on a totally geodesic $\H^1_\C$ in $\H^2_\C$ and by an element of $\U(1)$ on its normal bundle.
	\item If the $2$-dimensional eigenspace is positive definite, then $g$ is a \emph{reflection through a point}. In particular, it has a unique fixed point in $\H^2_\C$ and the action of $g$ on its tangent space, which is canonically identified with the rank $2$ eigenspace, is by a nontrivial scalar multiple of the identity.
	\end{enumerate}
	\item If $g$ is elliptic with three distinct eigenvalues, then it is \emph{regular elliptic}. Here, $g$ has a unique fixed point in $\H^2_\C$ where the induced action on the tangent space to that point is by a nonscalar cyclic group.
\end{enumerate}
Note that an element with only one eigenvalue is scalar, hence it acts trivially on $\H^2_\C$. Of particular importance in this paper is the observation that an elliptic element of $\SU(2,1)$ fails to have an isolated fixed point on $\H_\C^2$ if and only if it is a complex reflection.

\begin{remark}
There are no regular elliptic elements in $\SU(2,1)$ with order two action on $\H^2_\C$, simply since one cannot select three distinct numbers from $\{\pm 1\}$. This is why the prime $2$ is exceptional for the proof of Theorem \ref{there_are_isolated}.
\end{remark}

Suppose $\G < \SU(2,1)$ is a lattice and $\Lambda < \G$ is a torsion-free, normal subgroup of finite index. Then the finite group $F = \G / \Lambda$ admits an isometric action on the manifold $\H_\C^2 / \Lambda$. Whether or not $F$ has isolated fixed points is completely characterized by the torsion elements of $\G$.

\begin{lemma}\label{fixed-pts-tor}
Suppose $X = \H_\C^2 / \Lambda$ is a finite-volume complex hyperbolic $2$-manifold and $F$ is a finite group of isometries of $X$. Writing $X / F$ as $\H_\C^2 / \G$, then $F$ has isolated fixed points if and only if $\G$ contains no complex reflections.
\end{lemma}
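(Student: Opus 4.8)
The plan is to lift the $F$-action to the universal cover $\H^2_\C$ and read the answer off the classification of elliptic elements recalled above. Let $\pi\colon\H^2_\C\to X$ be the covering associated with the torsion-free lattice $\Lambda$, and (as in the statement) let $\G\supseteq\Lambda$ be the discrete group with $\G/\Lambda\cong F$ and $\H^2_\C/\G=X/F$, chosen so that $\pi$ intertwines the $\G$-action on $\H^2_\C$ with the $F$-action on $X$; note that $\G$ is a lattice, being commensurable with $\Lambda$. Because $F$ acts effectively and $\Lambda$ is torsion-free, no nontrivial element of $\G$ acts trivially on $\H^2_\C$, so $\G$ acts effectively on $\H^2_\C$ as a discrete group of isometries. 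Let $S\subseteq X$ be the set of points with nontrivial $F$-stabilizer; that $F$ has isolated fixed points means precisely that $S$ is discrete. Since $\Lambda$ acts freely on $\H^2_\C$, every $1\neq\gamma\in\G$ with a fixed point in $\H^2_\C$ lies in $\G\setminus\Lambda$ and is elliptic, and from this one checks directly that
\[
\pi^{-1}(S)=\{\,\tilde x\in\H^2_\C:\mathrm{Stab}_\G(\tilde x)\neq 1\,\}=\bigcup_{1\neq\gamma\in\G}\mathrm{Fix}_{\H^2_\C}(\gamma).
\]
A routine covering-space argument, using that $\pi$ is a local homeomorphism, shows that $S$ is discrete in $X$ if and only if $\pi^{-1}(S)$ is discrete in $\H^2_\C$, so it suffices to decide when the set on the right-hand side above is discrete.

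Suppose first that $\G$ contains a complex reflection $\gamma_0$. By the classification, $\mathrm{Fix}_{\H^2_\C}(\gamma_0)$ is a totally geodesic copy of $\H^1_\C$, hence a positive-dimensional subset of $\pi^{-1}(S)$, so $\pi^{-1}(S)$, and therefore $S$, is not discrete and $F$ does not have isolated fixed points. (Concretely, $\pi$ maps this $\H^1_\C$ onto a non-discrete subset of the fixed locus in $X$ of the nontrivial image of $\gamma_0$ in $F$.)

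Conversely, suppose $\G$ contains no complex reflection; I must show $\pi^{-1}(S)$ is discrete. If it were not, choose distinct points $\tilde x_n\in\pi^{-1}(S)$ converging to some $\tilde x\in\H^2_\C$, together with elements $1\neq\gamma_n\in\G$ fixing $\tilde x_n$. Fixing a ball $B$ about $\tilde x$ that contains all but finitely many of the $\tilde x_n$, we have $\tilde x_n\in\gamma_n B\cap B$ for those $n$, so proper discontinuity of the $\G$-action forces the $\gamma_n$ to take only finitely many values; after passing to a subsequence we may assume $\gamma_n=g$ for a single $1\neq g\in\G$. Then the closed set $\mathrm{Fix}_{\H^2_\C}(g)$ contains infinitely many of the distinct points $\tilde x_n$ and so is not discrete, while $g$ is a nontrivial elliptic element; by the classification this forces $g$ to be a complex reflection, contrary to hypothesis. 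Hence $\pi^{-1}(S)$, and therefore $S$, is discrete; that is, $F$ has isolated fixed points.

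The one substantive step is the last paragraph: ruling out an accumulating family of distinct fixed points in $\H^2_\C$ when $\G$ has no complex reflections. Proper discontinuity of the discrete group $\G$ does the work, concentrating such a family on the fixed locus of a single nontrivial elliptic element, whose fixed locus is thereby forced to be positive-dimensional --- and the only elliptic elements of $\SU(2,1)$ with that property are the complex reflections, which is exactly the observation highlighted before the statement. Everything else is bookkeeping about the covering $\pi$ and the extension $1\to\Lambda\to\G\to F\to 1$.
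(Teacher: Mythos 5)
Your proof is correct and follows essentially the same route as the paper's: lift fixed points of $F$ to elliptic elements of $\G$ and invoke the classification, under which an elliptic element fails to have an isolated fixed point exactly when it is a complex reflection. Your version simply spells out the covering-space bookkeeping and adds the proper-discontinuity argument ruling out accumulation of fixed points coming from distinct elements, details the paper leaves implicit.
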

\begin{proof}
If $F$ acts freely, then any torsion in $\G$ is central, so there is nothing to prove. If $f \in F$ has a fixed point $y \in X$, then the lift $\widetilde{f}$ of $f$ to $\H_\C^2$ defines a torsion element of $\G$ whose action on a lift of $y$ models the action of $f$. In particular, $y$ is an isolated fixed point if and only if $\widetilde{f}$ is not a complex reflection. The lemma follows.
\end{proof}

Essential to this paper for the odd prime case of Theorem \ref{there_are_isolated} is that complex reflections and regular elliptic elements are distinguished by congruence quotients.

\begin{lemma}\label{reduce-regular}
Suppose that $E$ is a number field with ring of integers $R_E$ and $\G < \SU(2,1)$ is a lattice contained in $\SL_3(R_E)$. Assume that $\G$ contains elements $\sigma, \tau$ such that $\sigma$ is a complex reflection and $\tau$ is regular elliptic. Then for all but finitely many prime ideals $\mathcal{Q}$ of $R_E$, the images of $\sigma$ and $\tau$ in $\SL_3(R_E / \mathcal{Q})$ under reduction modulo $\mathcal{Q}$ are not conjugate in $\SL_3(R_E / \mathcal{Q})$ and, moreover, the image of $\tau$ is nontrivial.
\end{lemma}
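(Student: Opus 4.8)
The plan is to separate $\sigma$ and $\tau$ already at the level of characteristic polynomials, and then invoke the elementary fact that matrices conjugate over a field have the same characteristic polynomial. Write $f_\sigma, f_\tau \in R_E[x]$ for the characteristic polynomials of $\sigma$ and $\tau$; both are monic of degree $3$. Because $\sigma$ is a complex reflection it has exactly two distinct eigenvalues $\zeta \neq \zeta'$, with $\zeta$ of multiplicity two, so $f_\sigma(x) = (x-\zeta)^2(x-\zeta')$ has a repeated root. Because $\tau$ is regular elliptic it has three distinct eigenvalues, so $f_\tau$ has no repeated root. Hence $f_\sigma \neq f_\tau$ in $E[x]$, and since both polynomials have coefficients in $R_E$, some coefficient $\delta$ of $f_\sigma - f_\tau$ is a nonzero element of $R_E$.

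Next I would use that a nonzero element of $R_E$ lies in only finitely many prime ideals, namely those dividing $(\delta)$. For every prime $\mathcal{Q}$ of $R_E$ not among these, the reductions $\overline{f_\sigma}$ and $\overline{f_\tau}$ in $(R_E/\mathcal{Q})[x]$ are distinct. Reduction modulo $\mathcal{Q}$ is a homomorphism $\SL_3(R_E) \to \SL_3(R_E/\mathcal{Q})$ sending $\sigma$ to a matrix with characteristic polynomial $\overline{f_\sigma}$ and $\tau$ to one with characteristic polynomial $\overline{f_\tau}$; since $R_E/\mathcal{Q}$ is a field, matrices conjugate in $\mathrm{GL}_3(R_E/\mathcal{Q})$ have the same characteristic polynomial. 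As $\overline{f_\sigma} \neq \overline{f_\tau}$, the images of $\sigma$ and $\tau$ are not conjugate in $\mathrm{GL}_3(R_E/\mathcal{Q})$, hence not conjugate in $\SL_3(R_E/\mathcal{Q})$.

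For the final assertion, observe that $\tau$ is not scalar, being regular elliptic, so $\tau - I$ is a nonzero matrix over $R_E$ and has some nonzero entry $c$; discarding also the finitely many primes dividing $(c)$ guarantees that the image of $\tau$ is nontrivial. (Alternatively, one can discard the finitely many primes dividing the discriminant of $f_\tau$, which is nonzero since $f_\tau$ has distinct roots; this forces the image of $\tau$ to be regular semisimple with three distinct eigenvalues, in particular nontrivial.) Excluding the union of these finitely many primes completes the argument. I do not expect a genuine obstacle: the only point that needs care is the dichotomy that $f_\sigma$ has a repeated root while $f_\tau$ does not, which is precisely what the classification of elliptic elements in \S\ref{orbifolds} supplies.
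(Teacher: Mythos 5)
Your proposal is correct and follows essentially the same route as the paper: distinguish $\sigma$ and $\tau$ by their characteristic polynomials (repeated root versus distinct roots), note that the polynomials can only agree modulo finitely many primes, and use that conjugate matrices over the residue field share a characteristic polynomial. Your handling of the nontriviality of the image of $\tau$ (a nonzero entry of $\tau - I$, or the discriminant of $f_\tau$) is a minor variation of the paper's comparison of $\chi_\tau$ with $(t-1)^3$, and is equally valid.
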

\begin{proof}
Since $\sigma$ has a repeated eigenvalue over $\C$ and $\tau$ has distinct eigenvalues, their characteristic polynomials $\chi_\sigma(t)$ and $\chi_\tau(t)$ are different elements of $R_E[t]$. Thus there are only finitely many prime ideals $\mathcal{Q}$ of $R_E$ so that $\chi_\sigma(t)$ and $\chi_\tau(t)$ are congruent modulo $\mathcal{Q}$, i.e., define the same element of $(R_E / \mathcal{Q})[t]$. If the reductions $\widehat{\sigma}$ and $\widehat{\tau}$ of $\sigma$ and $\tau$ are conjugate in $\SL_3(R_E / \mathcal{Q})$, then they have the same characteristic polynomial over the finite field $R_E / \mathcal{Q}$, hence there are only finitely many primes $\mathcal{Q}$ for which $\widehat{\sigma}$ and $\widehat{\tau}$ are conjugate in $\SL_3(R_E / \mathcal{Q})$. Applying the same argument to $(t - 1)^3$ ensures that the image of $\tau$ is nontrivial.
\end{proof}

\subsection{Arithmetic complex hyperbolic surfaces}
\label{ss:arith_cx_hyp}

Throughout this discussion, $k$ will be a totally real number field and $E/k$ a CM-extension (i.e., a totally imaginary quadratic extension of $k$). The rings of integers of these field will be denoted $R_k$ and
$R_E$ respectively. We will also fix an embedding of $E\hookrightarrow \C$ and refer to this as the identity embedding of $E$; note that complex conjugation restricts to the identity on $k$.

We now recall the construction of arithmetic subgroups of $\SU(2,1)$, beginning with those of \emph{simplest type}. Let $V$ be a $3$-dimensional vector space over $E$ equipped with an $E$-defined Hermitian form. That is, $H\in \GL(3,E)$ satisfies $H =\overline{H}^t$. Furthermore, we also assume that $H$ has signature $(2,1)$, and that for any nonidentity embedding $\nu : k \hookrightarrow \C$ and one (hence either) extension of $\nu$ to an embedding of $E$, the matrix $H^\nu$ obtained by applying $\nu$ entry-wise has signature $(3,0)$.
We can and will assume that the entries of $H$ lie in $R_E$. Define the algebraic group $\SU(H)$ as
	\[ \SU(H) = \{g\in\SL_3(\C) : gH\overline{g}^t=H\}. \]
The subgroup $\SU(H,R_E) = \SU(H) \cap SL(3,R_E)$ is an arithmetic lattice of $\SU(H,\R)$ that, after appropriate conjugation in $\GL(3,\C)$ to take $H$ to the standard form, determines an arithmetic lattice in $\SU(2,1)$. The arithmetic lattices of simplest type are those determined by the totality of all commensurability classes of the groups described above. Finally, $\SU(H, R_E)$ is cocompact if and only if $k \neq \Q$.

\begin{remark}
\label{some_remarks}
In \cite{McR}, it is shown that if $G$ is a finite subgroup of $\U(2) \times \U(1)$, then there exists an arithmetic lattice of simplest type that contains a subgroup isomorphic to $G$.
\end{remark}

The other arithmetic lattices are those of \emph{division algebra type}. Continuing with the above notation, let $A$ be a division algebra over $E$ with center $E$ and degree three over $E$, so then $A$ is a $9$-dimensional $E$-vector space. Suppose that $A$ also admits an involution $\tau$ such that the restriction $\tau|_E$ from the natural inclusion $E \hookrightarrow A$ is complex conjugation; such an involution is called an \emph{involution of the second kind}. A Hermitian element of $A$ is an element $h\in A^*$ such that $\tau(h) = h$. Now $A\otimes_E \C \cong M(3,\C)$ and we insist that $h$ has signature $(2,1)$ and has signature $(3,0)$ after applying any embedding $\sigma : E\hookrightarrow \C$ extending a nonidentity embedding of $k$.

Define the algebraic group $\SU(h,A)$ as:
	\[ \SU(h,A) =\! \left\{x \in A^1 : \tau(x)hx = h\right\}\!, \]
where $A^1$ denotes the elements of $A$ with reduced norm $1$; note that the reduced norm becomes the determinant under any matrix embedding of $A$. Let $\mathcal{O}\subset A$ be a $R_E$-order of $A$ and define the arithmetic lattice $\SU(h,\mathcal{O}) = \SU(h,A) \cap \mathcal{O}$. This then defines an arithmetic lattice in $\SU(2,1)$. The arithmetic lattices of division algebra type are those determined by the totality of all commensurability classes of the groups described above.

\begin{remark}\label{some_remarks2}
All arithmetic lattices of division algebra type are cocompact. In fact, it is well-known to experts that taking $k=\Q$ and $E=\Q(\sqrt{-d})$ for some square-free $d > 0$ in the construction of lattices of simplest type provides the only commensurability classes that are not cocompact.
\end{remark}

The next two subsections study torsion in arithmetic lattices of simplest type and division algebra type, respectively.

\subsection{The groups $G(m,p,2)$}

This section describes a special case of the results in \cite{McR} in some detail. Fix integers $m, p \ge 2$ such that $p$ divides $m$. In what follows, $\mu_m$ will denote the group of all $m^{th}$ roots of unity. First, consider the abelian subgroup
	\[ A(m,p,2) =\!\left\{\begin{pmatrix} \xi_1 & 0 \\ 0 & \xi_2 \end{pmatrix}\ :\ \xi_1, \xi_2 \in \mu_m,\, (\xi_1 \xi_2)^{\frac{m}{p}} = 1 \right\} \]
of $\U(2)$, which is normalized by the element $w \in \U(2)$ of order two acting by coordinate permutations. Then $G(m,p,2) = A(m,p,2) \rtimes \langle w \rangle$. Note that $G(m,m,2)$ is simply the dihedral group of order $2 m$. The following records some standard facts about $G(m,p,2)$.

\begin{proposition}\label{G(m,p,2)props}
The group $G(m,p,2)$ has the following properties:
\begin{enumerate}

\item $G(m,p,2)$ preserves the standard positive definite hermitian form on $\C^2$;

\item $|G(m,p,2)| = \frac{2 m^2}{p}$;

\item complex reflections in $G(m,p,2)$ have order $2$ or dividing $\frac{m}{p}$;

\item complex reflections in $G(m,p,2)$ not of order $2$ are contained in $A(m,p,2)$;

\item if $p \ge 3$ is odd and $\gcd(\frac{m}{p}, p) = 1$, then no power of an element of $G(m,p,2)$ of order $p$ is a complex reflection;

\item if $p \ge 3$, then $G(m,p,2)$ contains an element of order $p$ with distinct nontrivial eigenvalues.

\end{enumerate}
\end{proposition}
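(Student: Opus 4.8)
The plan is to exploit the very explicit description of $G(m,p,2)$: every element is either a diagonal matrix $\mathrm{diag}(\xi_1,\xi_2)$ with $\xi_1,\xi_2\in\mu_m$ and $(\xi_1\xi_2)^{m/p}=1$, or an anti-diagonal matrix $\begin{pmatrix}0&\xi_1\\\xi_2&0\end{pmatrix}$ with the same constraint on $\xi_1,\xi_2$, these latter being precisely the elements of the nontrivial coset of $A(m,p,2)$ in $G(m,p,2)$. Here a \emph{complex reflection} in $G(m,p,2)$ means a non-identity element of $G(m,p,2)\subset\U(2)$ fixing a complex line of $\C^2$ pointwise, equivalently one whose eigenvalue $1$ occurs with multiplicity exactly one. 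Parts (1) and (2) are then immediate: (1) holds because diagonal unitary matrices and the permutation matrix $w$ all lie in $\U(2)$; and for (2), the assignment $(\xi_1,\xi_2)\mapsto(\xi_1\xi_2)^{m/p}$ is a homomorphism $\mu_m\times\mu_m\to\mu_m$ onto the subgroup of $\mu_m$ of order $p$, with kernel $A(m,p,2)$, so $|A(m,p,2)|=m^2/p$ and $|G(m,p,2)|=2\,|A(m,p,2)|$ since $w\notin A(m,p,2)$.

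For (3) and (4) I would classify the complex reflections using the dichotomy above. A diagonal element $\mathrm{diag}(\xi_1,\xi_2)\ne\Id$ is a complex reflection exactly when precisely one of $\xi_1,\xi_2$ equals $1$; if $\xi_1=1$ and $\xi_2\ne1$ then the defining relation reads $\xi_2^{m/p}=1$, so $\xi_2\in\mu_{m/p}$ and the element has order $\mathrm{ord}(\xi_2)$, a divisor of $m/p$ (the case $\xi_2=1$ is symmetric). An anti-diagonal element has characteristic polynomial $t^2-\xi_1\xi_2$, hence the two distinct eigenvalues $\pm\sqrt{\xi_1\xi_2}$, so it is a complex reflection exactly when $\xi_1\xi_2=1$, in which case its eigenvalues are $1$ and $-1$ and it has order $2$. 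This proves (3), and it also proves (4): an anti-diagonal complex reflection has order $2$, so a complex reflection of order $\ne2$ must be diagonal and hence lies in $A(m,p,2)$.

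For (6), since $p\mid m$ the primitive $p$-th root of unity $\zeta=e^{2\pi i/p}$ lies in $\mu_m$, and $g=\mathrm{diag}(\zeta,\zeta^{-1})$ satisfies $(\zeta\zeta^{-1})^{m/p}=1$, so $g\in A(m,p,2)$; it has order $p$, and its eigenvalues $\zeta,\zeta^{-1}$ are distinct (because $p\ge3$ forces $\zeta^2\ne1$) and nontrivial. For (5), I would first note that $G(m,p,2)/A(m,p,2)$ has order $2$ while a cyclic group of odd order has no subgroup of index $2$, so every element $g$ of order $p$ lies in $A(m,p,2)$; writing $g=\mathrm{diag}(\xi_1,\xi_2)$, the product $\xi_1\xi_2$ has order dividing both $p$ (as $g^p=\Id$) and $m/p$ (the membership constraint), hence order dividing $\gcd(m/p,p)=1$, so $\xi_2=\xi_1^{-1}$. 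Consequently every power of $g$ has the form $\mathrm{diag}(\eta,\eta^{-1})$, whose two eigenvalues are mutually inverse and so are either both equal to $1$ or both different from $1$; in neither case does the eigenvalue $1$ occur with multiplicity exactly one, so no power of $g$ is a complex reflection.

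All of this is elementary linear algebra or elementary number theory, so I do not expect a serious obstacle; the one place where the hypotheses genuinely contribute is the coprimality argument in (5), which forces the order-$p$ elements into the single family $\mathrm{diag}(\xi,\xi^{-1})$, whose powers are never complex reflections. That is precisely the feature which will let $G(m,p,2)$ supply the cyclic groups acting with isolated fixed points in the proof of Theorem \ref{there_are_isolated}.
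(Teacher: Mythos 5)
Your proof is correct, but it is more self-contained than the paper's: where the paper simply cites Lehrer--Taylor for the order formula (2) and for the classification of complex reflections (3)--(4), you derive these directly from the coset decomposition $G(m,p,2)=A(m,p,2)\sqcup A(m,p,2)w$, computing the kernel of $(\xi_1,\xi_2)\mapsto(\xi_1\xi_2)^{m/p}$ for (2) and checking the eigenvalue-$1$ condition separately on diagonal and anti-diagonal elements for (3)--(4). Your argument for (5) also differs in mechanism: the paper deduces it immediately from (3), noting that a nontrivial power of an order-$p$ element has order a nontrivial divisor of $p$, which cannot divide $2$ or $m/p$ under the hypotheses; you instead show that every order-$p$ element must lie in $A(m,p,2)$ and, by the coprimality $\gcd(m/p,p)=1$, must have the form $\mathrm{diag}(\xi,\xi^{-1})$, whose powers never have $1$ as a simple eigenvalue. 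Both are valid; the paper's route is shorter once (3) is granted, while yours gives the extra structural information (used implicitly in the paper only for (6)) that the $p$-torsion in $G(m,p,2)$ sits in the one-parameter family $\mathrm{diag}(\xi,\xi^{-1})$, and it keeps the whole proposition independent of the cited reference. Parts (1) and (6) coincide with the paper's argument, including the same witness $\mathrm{diag}(\zeta_p,\zeta_p^{-1})$.
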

\begin{proof}
The first statement is clear from the fact that diagonal matrices and $w$ all preserve the standard inner product on $\C^2$. See \cite[\S 2.2]{URG} for the second statement and \cite[Lem.~2.8]{URG} for the third and fourth. The fifth statement is simply that no nontrivial divisor of $p$ divides $2$ or $\frac{m}{p}$. For the last statement, the element
\begin{equation}\label{gp}
g_p = \begin{pmatrix} \zeta_p & 0 \\ 0 & \zeta_p^{-1} \end{pmatrix}
\end{equation}
is in $G(m,p,2)$ for any primitive $p^{th}$ root of unity $\zeta_p$ and has distinct eigenvalues if $p \ge 3$.
\end{proof}

In Proposition \ref{G(m,p,2)props}, \emph{complex reflection} means the classical definition: an element of $\U(2)$ with $1$ as an eigenvalue. However, this ends up having the same meaning once we let $G(m,p,2)$ act on $\H_\C^2$. Specifically, take $\U(2,1)$ with respect to the standard hermitian form $|z_1|^2 + |z_2|^2 - |z_3|^2$ on $\C^3$, which by Proposition \ref{G(m,p,2)props}(1) gives an embedding
\begin{equation}\label{Gembed}
\begin{pmatrix} \xi_1 & 0 \\ 0 & \xi_2 \end{pmatrix} \mapsto \begin{pmatrix} \xi_1 & 0 & 0 \\ 0 & \xi_2 & 0 \\ 0 & 0 & 1 \end{pmatrix}
\end{equation}
of $A(m,p,2)$ in $\U(2,1)$. Note that $G(m,p,2)$ then embeds in $\PU(2,1)$ under projection, so it acts faithfully on $\H^2_\C$.

\medskip

\noindent
\textbf{Convention:} For the remainder of this paper, $G(m,p,2)$ is considered as embedded in $\U(2,1)$ by Equation \eqref{Gembed}.

\medskip

Fortunately, the two notions of complex reflection coincide.

\begin{lemma}\label{still-a-reflection}
An element $g \in A(m,p,2)$ is a complex reflection in $G(m,p,2)$ if and only if it acts on $\H^2_\C$ as a complex reflection, and $g$ acts as a regular elliptic if and only if $\xi_1 \neq \xi_2$ and neither equals one.
\end{lemma}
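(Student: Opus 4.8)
The plan is to unwind the definitions from Section~\ref{orbifolds} and compare them with the classical notion of complex reflection used in Proposition~\ref{G(m,p,2)props}, working entirely with the explicit diagonal embedding \eqref{Gembed}. Write $g = \operatorname{diag}(\xi_1,\xi_2) \in A(m,p,2)$, so that under \eqref{Gembed} it becomes $\operatorname{diag}(\xi_1,\xi_2,1) \in \U(2,1)$ with respect to the form $|z_1|^2+|z_2|^2-|z_3|^2$. The eigenvalues of $g$ acting on $\C^3$ are exactly $\xi_1$, $\xi_2$, $1$, with eigenvectors the standard basis vectors $e_1, e_2, e_3$; note $e_1, e_2$ span a positive definite plane and $e_3$ spans a negative line.

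First I would handle the classification by cases on how many of $\xi_1,\xi_2,1$ coincide. If $\xi_1 = \xi_2 = 1$ then $g$ is trivial, so assume not. If exactly two of the three eigenvalues agree, there are two subcases: either $\xi_1 = \xi_2 \neq 1$, in which case the repeated eigenvalue $\xi_1$ has eigenspace $\langle e_1, e_2\rangle$ which is positive definite, so by item~(1b) of the classification $g$ is a reflection through a point (and in particular has an isolated fixed point); or one of the $\xi_i$ equals $1$, say $\xi_2 = 1 \neq \xi_1$, in which case the repeated eigenvalue $1$ has eigenspace $\langle e_2, e_3 \rangle$, which has signature $(1,1)$, so by item~(1a) $g$ is a complex reflection on $\H^2_\C$. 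Finally, if $\xi_1, \xi_2, 1$ are pairwise distinct, then $g$ has three distinct eigenvalues and is regular elliptic by item~(2); this is precisely the condition $\xi_1 \neq \xi_2$ with neither equal to one, giving the second assertion of the lemma.

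For the first assertion I would match this against the classical definition: $g \in A(m,p,2) \subset \U(2)$ is a complex reflection in $G(m,p,2)$ in the classical sense exactly when it has $1$ as an eigenvalue and is nontrivial, i.e.\ exactly when one of $\xi_1, \xi_2$ equals $1$ and the other does not (the case $\xi_1 = \xi_2 \neq 1$ is a nontrivial scalar, not a classical complex reflection of $\U(2)$). By the case analysis above, this happens if and only if $g$ acts on $\H^2_\C$ as a complex reflection in the sense of item~(1a). One should also note that the scalar case $\xi_1 = \xi_2 \neq 1$, while a "reflection through a point" on $\H^2_\C$, is not being called a complex reflection here, consistent with the observation in Section~\ref{orbifolds} that the only elliptics without isolated fixed points are the complex reflections of type (1a).

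I do not expect a serious obstacle here: everything reduces to reading off eigenspaces and their signatures from the diagonal form, and the main point is simply bookkeeping to make sure the classical and geometric notions of "complex reflection" are being lined up correctly (in particular, that a nontrivial scalar in $\U(2)$ is excluded on both sides). The only mild subtlety is the case where $\xi_1 = \xi_2 \neq 1$: one must observe that this is a scalar matrix in $\U(2)$ and hence not a complex reflection in the classical sense, so that both directions of the stated equivalence hold without a spurious exception.
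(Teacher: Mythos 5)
Your argument is correct and is essentially the paper's own proof: both read off the eigenvalues $\xi_1,\xi_2,1$ and the signatures of the eigenspaces from the diagonal embedding \eqref{Gembed} and match them against the classification of elliptic elements, taking care that a nontrivial scalar of $\U(2)$ is a reflection through a point rather than a complex reflection. The only cosmetic difference is that the paper first rescales to $\det(g)^{-1/3}g \in \SU(2,1)$ before invoking the classification, a step your direct application of items (1a), (1b), (2) to the $\U(2,1)$ matrix implicitly absorbs since scalar rescaling changes neither eigenvalue multiplicities nor eigenspaces.
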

\begin{proof}
Inspection of Equation \eqref{Gembed} shows that $g$ is a complex reflection as an element of $\U(2)$ if and only if it has $1$ as a repeated eigenvalue as an element of $\U(2,1)$, and $g$ then has a repeated eigenvalue for which the $2$-dimensional eigenspace has signature $(1,1)$. Equivalently, $\det(g)^{- \frac{1}{3}} g \in \SU(2,1)$, which has the same action on $\H_\C^2$ as $g$, has a repeated eigenvalue with eigenspace of signature $(1,1)$, which means precisely that $g$ acts on $\H_\C^2$ as a complex reflection. A similar argument shows that $\xi_1 \neq \xi_2$ with neither equal to one if and only if $g$ acts as a regular elliptic.
\end{proof}

The following last fact will be used in the sequel.

\begin{corollary}\label{RegularElliptic}
Let $p$ be an odd prime, $m$ be divisible by $p$, and suppose that $\G < \SU(2,1)$ contains
\begin{equation}\label{SpecialG}
S(m, p, 2) = G(m, p, 2) \cap \SU(2,1).
\end{equation}
Then $\G$ contains a regular elliptic element of order $p$.
\end{corollary}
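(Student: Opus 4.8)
The plan is to exhibit an explicit element, namely (the image in $\U(2,1)$ of) the element $g_p$ from Equation~\eqref{gp}. First I would check that $g_p = \begin{pmatrix} \zeta_p & 0 \\ 0 & \zeta_p^{-1}\end{pmatrix}$ actually lies in $A(m,p,2)$: since $p \mid m$ we have $\zeta_p, \zeta_p^{-1} \in \mu_m$, and with $\xi_1 = \zeta_p$, $\xi_2 = \zeta_p^{-1}$ the product $\xi_1 \xi_2 = 1$ trivially satisfies $(\xi_1 \xi_2)^{m/p} = 1$. Hence $g_p \in A(m,p,2) \subset G(m,p,2)$, and under the embedding \eqref{Gembed} it becomes $\mathrm{diag}(\zeta_p, \zeta_p^{-1}, 1)$, a diagonal matrix with unit-modulus entries and determinant $1$; therefore its image lies in $S(m,p,2) = G(m,p,2) \cap \SU(2,1) \subseteq \G$.

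Next I would apply Lemma~\ref{still-a-reflection} with $\xi_1 = \zeta_p$ and $\xi_2 = \zeta_p^{-1}$. Since $p \geq 3$ is odd, $\zeta_p$ is a primitive $p$-th root of unity, so $\zeta_p \neq 1$, and $\zeta_p \neq \zeta_p^{-1}$ (the latter equality would force $\zeta_p^2 = 1$, impossible for $p$ odd). Thus the hypotheses of Lemma~\ref{still-a-reflection} for being regular elliptic are met, and $g_p$ acts on $\H^2_\C$ as a regular elliptic element. Finally, the eigenvalues $\zeta_p, \zeta_p^{-1}, 1$ show $g_p$ has order exactly $p$ as a matrix, and since $g_p^j = \mathrm{diag}(\zeta_p^j, \zeta_p^{-j}, 1)$ is scalar only when $p \mid j$, the induced isometry of $\H^2_\C$ also has order $p$. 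This produces the desired regular elliptic element of order $p$ in $\G$.

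There is essentially no obstacle here beyond bookkeeping; the one point that deserves care is that the embedding \eqref{Gembed} lands $g_p$ in $\SU(2,1)$ rather than merely in $\U(2,1)$ — this is exactly the (convenient) consequence of $\det g_p = 1$, so that no rescaling by a cube root of the determinant, which could in principle disturb the eigenvalue pattern or the order, is ever needed. I would present the whole argument as a short direct verification combining Proposition~\ref{G(m,p,2)props}(6) and Lemma~\ref{still-a-reflection}.
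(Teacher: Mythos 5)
Your proposal is correct and follows exactly the paper's own argument: both exhibit the element $g_p$ from Equation~\eqref{gp}, note it lies in $S(m,p,2)$ with determinant $1$, and invoke Lemma~\ref{still-a-reflection} using that $\zeta_p \neq \zeta_p^{-1}$ and neither equals $1$ for $p$ odd. Your write-up simply spells out the bookkeeping (membership in $A(m,p,2)$, the order-$p$ check) that the paper leaves implicit.
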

\begin{proof}
The element $g_p$ from Equation \eqref{gp} from the proof of Proposition \ref{G(m,p,2)props} is in $S(m,p,2)$, and it has distinct eigenvalues since $p$ is odd. It acts on $\H_\C^2$ as a regular elliptic element by Lemma \ref{still-a-reflection}.
\end{proof}

\subsection{Torsion in lattices of division algebra type}\label{ssec:DivTor}

Torsion in an arithmetic lattice of division algebra type is quite special. The first restriction on torsion in lattices of division algebra type is as follows.

\begin{lemma}\label{lem:DivAlgElliptic}
Suppose $\Gamma < \SU(2,1)$ is an arithmetic lattice of division algebra type and $\tau \in \Gamma$ has finite order and is not central. Then $\tau$ is regular elliptic.
\end{lemma}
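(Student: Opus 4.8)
The plan is to exploit that $\Gamma$ sits inside the multiplicative structure of the degree-$3$ division algebra $A$, so that the non-central torsion element $\tau$ generates a large commutative subfield of $A$, and this forces the characteristic polynomial of $\tau$ to be irreducible of degree $3$, hence separable, hence to have three distinct roots.

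First I would record that, since $\Gamma$ is of division algebra type, we may view $\Gamma$ inside $\SU(h,A) \subset A$, so that $\tau$ is an element of $A$. Consider the $E$-subalgebra $E[\tau] \subseteq A$. It is a commutative, finite-dimensional $E$-algebra which is a domain, because $A$ has no zero divisors, hence it is a field; write $L = E(\tau)$ and $d = [L:E]$. A standard fact about subfields of central division algebras of degree $n$ (via the double centralizer theorem, or directly by decomposing $\C^3$ into the eigenlines of $L\otimes_E\C\cong\C^d$) forces $d \mid n$; here $n = 3$, so $d \in \{1,3\}$. If $d = 1$, then $\tau \in E$, the center of $A$, so under the isomorphism $A\otimes_E\C \cong M_3(\C)$ determined by the identity embedding of $E$ the element $\tau$ maps to a scalar matrix, hence is central in $\SU(2,1)$, contrary to hypothesis. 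Therefore $d = 3$.

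Next, with $d = 3$ the reduced characteristic polynomial $p_\tau(t) \in E[t]$ of $\tau$ — which has degree $3$ and, after the base change $A\otimes_E\C \cong M_3(\C)$, is exactly the characteristic polynomial of $\tau$ as a complex $3\times 3$ matrix, hence the matrix recording the eigenvalues of $\tau\in\SU(2,1)$ — coincides with the minimal polynomial of $\tau$ over $E$. This minimal polynomial is irreducible over the number field $E$, hence separable, so $p_\tau$ has three distinct roots in $\C$. Thus $\tau$, viewed in $\SU(2,1)$, has three distinct eigenvalues. Since $\tau$ has finite order it is semisimple with eigenvalues on the unit circle and fixes a point of $\H^2_\C$ (Cartan's fixed point theorem applied to the finite group $\langle\tau\rangle$), i.e.\ it is elliptic; and an elliptic element of $\SU(2,1)$ with three distinct eigenvalues is regular elliptic by the classification recalled in Section \ref{orbifolds}. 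This proves the lemma.

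I do not anticipate a serious obstacle: morally the statement is just that multiplication by a non-scalar element of a degree-$3$ division algebra cannot have a repeated eigenvalue. The one point requiring a little care is the bookkeeping that identifies the reduced characteristic polynomial of $\tau$ with the characteristic polynomial of its image in $M_3(\C)$ and establishes $d \mid 3$ (the decomposition argument above in fact gives $p_\tau = m_\tau^{3/d}$, which settles both at once); and, depending on exactly how "of division algebra type" is parsed for $\Gamma$, one may first wish to pass to a finite-index or conjugate subgroup in order to view $\Gamma$, and in particular $\tau$, inside $\SU(h,A) \subset A$.
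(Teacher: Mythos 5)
The core of your argument --- a non-central element of $A$ generates a cubic subfield $E(\tau)$, its minimal polynomial is the degree-$3$ reduced characteristic polynomial, irreducibility over the number field $E$ gives separability and hence three distinct eigenvalues, and finite order gives ellipticity --- is correct and is essentially the same algebraic mechanism as the paper's proof, which packages it via Cayley--Hamilton instead: a repeated eigenvalue forces the cubic characteristic polynomial to have a linear factor $(t-\lambda)$ over $E$, producing a zero divisor $\sigma - \lambda\Id$ in the division algebra.

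The genuine gap is your very first step, which you flag but do not repair: in this paper ``of division algebra type'' means that $\Gamma$ lies in the commensurability class of some $\SU(h,\mathcal{O})$, so $\Gamma$ need not be contained in $\SU(h,A)$, and in particular $\tau$ need not lie in $A$ at all. Conjugating fixes nothing essential here, and passing to a finite-index subgroup cannot fix it either, since there is no reason the torsion element $\tau$ survives in the smaller group (finite-index subgroups can even be chosen torsion-free). The phenomenon is real: for a maximal lattice of division algebra type, $\Gamma \cap A^1$ is a proper normal subgroup of index a power of $3$ (this is exactly what the proof of Lemma~\ref{lem:DivAlgp=1(3)} uses), so there genuinely are finite-order elements of such $\Gamma$ outside $A^1$. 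The paper's fix, which your argument needs before it can start, is: realize $A \subset \mathrm{M}_3(\C)$ as the $E$-span of a finite-index subgroup of $\Gamma$; note that $\tau$ normalizes $A$ because it commensurates that subgroup; then apply Skolem--Noether together with the fact that the centralizer of $A$ in $\mathrm{M}_3(\C)$ is the scalars to write $\sigma = \xi\tau \in A^*$ for some scalar $\xi$. Since scaling does not affect whether the eigenvalues are distinct, your field-degree argument then applies verbatim to $\sigma$ in place of $\tau$, and with that amendment your proof is complete.
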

\begin{proof}
Let $A$ be the division algebra associated with $\Gamma$ and $E$ its center. Since $A$ can be realized in $\mathrm{M}_3(\C)$ as the $E$-span of any finite-index subgroup of $\Gamma$, and $\tau$ normalizes some finite-index subgroup of $A^1 \cap \Gamma$, $\tau$ normalizes $A$. By the Skolem--Noether theorem, there is some scalar matrix $\xi$ (i.e., an element of the kernel of the conjugation action) so that $\sigma = \xi \tau$ is in $A^*$. Note that $\sigma$ has distinct eigenvalues if and only if $\tau$ does. Then $\sigma$ has a cubic characteristic polynomial $f(t) \in E[t]$. If $\tau$ does not have distinct eigenvalues, then $f(t)$ has a linear factor $(t - \lambda)$ for some $\lambda \in E$. It now follows from the Cayley--Hamilton Theorem that $\tau - \lambda \Id$ is a zero-divisor in $A$, contradicting the fact that $A$ is a division algebra. This completes the proof.
\end{proof}
The following generalizes an observation due to Prasad and Yeung \cite[Lem.\ 9.2]{PY}.

\begin{lemma}\label{lem:DivAlgp=1(3)}
Suppose that $\Gamma < \SU(2,1)$ is of division algebra type and $\tau \in \Gamma$ acts on $\H_\C^2$ as an elliptic element of order $n$. Then $\varphi(n)$ or $\varphi(3 n)$ is divisible by $3$, where $\varphi$ is Euler's totient function. In particular, if $n = p > 3$ is prime then $p \equiv 1 \pmod{3}$.
\end{lemma}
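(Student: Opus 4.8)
The plan is to identify the cubic subfield of the division algebra generated by $\tau$ (up to a scalar) and compare its degree over $E$ with that of the relevant cyclotomic extension of $E$. We may assume $n \ge 2$, as otherwise $\tau$ is central and there is nothing to prove. Arguing as in the proof of Lemma~\ref{lem:DivAlgElliptic}, $\tau$ is regular elliptic and normalizes the division algebra $A$, so by Skolem--Noether there is a scalar $\xi \in \C^\times$ with $\sigma := \xi\tau \in A^\times$. Since $\tau$ has three distinct eigenvalues, so does $\sigma$, and hence its reduced characteristic polynomial over $E$ is a separable cubic; it is moreover irreducible, since a linear factor $t - \lambda$ with $\lambda \in E$ would (via Cayley--Hamilton, exactly as in the proof of Lemma~\ref{lem:DivAlgElliptic}) make $\sigma - \lambda$ a zero-divisor in $A$, which is impossible as $A$ is a division algebra. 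Thus $F := E(\sigma)$ is a degree-$3$ field extension of $E$.

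Next I would observe that $\sigma$ satisfies an $E$-binomial of exponent exactly $n$: since $\tau = \xi^{-1}\sigma$, a power $\sigma^k$ is scalar if and only if $\tau^k$ is, which holds if and only if $\tau^k$ acts trivially on $\H_\C^2$, i.e.\ if and only if $n \mid k$; and a scalar matrix lying in $A$ lies in the center $E$. Hence $\sigma^n = a$ for some $a \in E^\times$, with $n$ least. Put $L = E(\zeta_n)$, the $n$-th cyclotomic extension of $E$, so that $[L : E]$ divides $\varphi(n)$. Since $\mu_n \subseteq L$, the polynomial $t^n - a$ splits completely in $L(\sigma)$, so $L(\sigma)/L$ is a cyclic Kummer extension of some degree $d$ dividing $n$.

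Finally I would compute $[L(\sigma) : E]$ two ways: it equals $[L(\sigma) : L]\,[L : E] = d\,[L : E]$, and it is divisible by $[F : E] = 3$ because $F \subseteq L(\sigma)$. Hence $3 \mid d\,[L : E]$, and as $3$ is prime either $3 \mid [L : E]$, so that $3 \mid \varphi(n)$, or $3 \mid d$, so that $3 \mid n$. In the latter case an elementary check of the totient function finishes: if $9 \mid n$ then $3 \mid \varphi(n)$, while if $n = 3m$ with $\gcd(m,3) = 1$ then $\varphi(3n) = \varphi(9)\varphi(m) = 6\varphi(m)$ is divisible by $3$. This gives $3 \mid \varphi(n)$ or $3 \mid \varphi(3n)$. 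For the last assertion, when $n = p > 3$ is prime we have $\varphi(p) = p - 1$ and $\varphi(3p) = 2(p-1)$, so in either case $3 \mid p - 1$, i.e.\ $p \equiv 1 \pmod 3$.

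The load-bearing steps are the irreducibility of the reduced characteristic polynomial of $\sigma$ --- this is exactly where division algebra type, rather than simplest type, is essential, since in $\SL_3(R_E)$ there is no zero-divisor obstruction preventing $E$-rational eigenvalues --- and the two-way degree count forcing $3 \mid d\,[L : E]$. The reduction from $\tau$ to $\sigma \in A^\times$ and the extraction of the exact exponent $n$ are routine, as is the totient bookkeeping turning $3 \mid n$ into divisibility of $\varphi(n)$ or $\varphi(3n)$, though the last point needs a moment's care.
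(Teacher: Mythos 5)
Your argument is correct, but it takes a genuinely different route from the paper's. The paper first reduces to a maximal lattice and invokes the Prasad--Yeung fact that $\Gamma \cap A^1$ is normal of index a power of $3$, so that one may assume $\tau \in A^1$; then $\tau^n$ is a central element of reduced norm one, hence a cube root of unity, so $\tau$ itself is a root of unity of order $n$ or $3n$, and the divisibility by $3$ is extracted by a Galois-theoretic comparison of the cyclotomic field $\Q(\tau)$ with the Galois closure $\overline{E}$ of $E$. You instead stay with $\sigma = \xi\tau \in A^\times$ produced by Skolem--Noether (exactly as in the proof of Lemma \ref{lem:DivAlgElliptic}), observe that $\sigma^n \in E^\times$ without needing it to be a root of unity, and run Kummer theory over $L = E(\zeta_n)$: since $[E(\sigma):E]=3$ by irreducibility of the reduced characteristic polynomial, $3$ divides $d\,[L:E]$ with $d \mid n$ and $[L:E] \mid \varphi(n)$, whence $3 \mid \varphi(n)$ or $3 \mid n$, and the totient bookkeeping finishes. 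What your route buys is the elimination of the structural input about maximal lattices and the index-$3$ normality of $\Gamma \cap A^1$, and it treats any $\tau$ normalizing $A$ uniformly; the paper's route instead yields the finer intermediate fact that $\tau$ (once pushed into $A^1$) is literally an $n$-th or $(3n)$-th root of unity. Two small presentational points: the compositum $L(\sigma)$ should be formed after embedding the abstract cubic field $E(\sigma) \cong E[t]/(f)$ into an algebraic closure of $E$ (equivalently, replace $\sigma$ by one of its eigenvalues, which satisfies the same binomial); and in the excluded case $n=1$ the conclusion of the lemma is vacuously about nontrivial elliptic actions rather than literally true, an implicit convention the paper shares, so neither point affects the substance.
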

\begin{proof}
It suffices to consider $\Gamma$ a maximal lattice in $\SU(2,1)$. It is known that $\Gamma \cap A^1$ is a normal subgroup of $\Gamma$ with index a power of $3$ (see the Remark after Eq.\ (0) in \cite{PY}). Since $3 \mid \varphi(9)$, it suffices to prove the lemma for $\tau \in A^1$, which implies that $\tau^n$ is central in $A$. Moreover, the subalgebra $E(\tau)$ is a commutative subalgebra, hence it is a finite extension of $E$. Note that $\tau \notin E$, since otherwise it is central in $A$. Therefore $E(\tau) / E$ is an extension of degree three.

Since $\tau^n$ has reduced norm one, and the reduced norm on the center is simply $z \mapsto z^3$, it follows that $\tau^n$ is a $3^{rd}$ root of unity. In particular, $\tau$ is either an $n^{th}$ or $(3 n)^{th}$ root of unity, hence its minimal polynomial over $\Q$ has degree $\varphi(k n)$ for $k \in \{1,3\}$. Since $\Q(\tau)$ is then Galois over $\Q$, it follows that $\overline{E}(\tau)$ is Galois over the Galois closure $\overline{E}$ of $E$ over $\Q$, with degree $1$ or $3$.

First, suppose that $\overline{E}(\tau) / \overline{E}$ is degree $3$. The diagram
	\[ \begin{tikzcd} & & \overline{E}(\tau) \\ \overline{E} \arrow[urr, -, "3"] & & \\ & & \Q(\tau) \arrow[uu, -, "d" right] \\ \overline{E} \cap \Q(\tau) \arrow[uu,-, "m^\prime"] \arrow[urr,-] & & \\ & \Q \arrow[ul, -, "m"] \arrow[uur, -, "\varphi(kn)" below right] & \end{tikzcd} \]
then consists only of Galois extensions. It is then a standard exercise in Galois theory to prove that $[\Q(\tau) : \overline{E} \cap \Q(\tau)] = 3$ and that this degree divides $\varphi(k n)$.

Now suppose that $E(\tau) \subseteq \overline{E}$. Let $G$ denote the Galois group of $E$ over $\Q$, $G_E$ be the stabilizer of $E$ in $G$, and $G_\tau$ be the stabilizer of the subfield $\Q(\tau)$ of $\overline{E}$. Since $E(\tau) / E$ is degree three, there is a diagram
	\[ \begin{tikzcd} & \{1\} \arrow[d, -] & \\ & G_H \cap G_\tau \arrow[dr, -] \arrow[dl, -, "3" above left] & \\ G_H \arrow[dr, -] & & G_\tau \arrow[dl, -, "\varphi(kn)"] \\ & G & \end{tikzcd} \]
of subgroups of $G$. The image of $G_H$ in $G / G_\tau \cong \mathrm{Gal}(\Q(\tau) / \Q)$ is then $G_H / (G_H \cap G_\tau) \cong \Z / 3 \Z$. Thus $[\Q(\tau) : \Q]$ is again divisible by three, as desired. Finally, note that if $n = p > 3$ is a prime not equal to $3$, then $\varphi(3 p) = 2 (p-1)$, hence $p \equiv 1 \pmod{3}$ for either value of $k$.
\end{proof}

In particular, one can construct many commensurability classes of lattices of division algebra type so that $
\SU(h, \mathcal{O})$ is torsion-free. More germane to the purposes of this paper, Lemma \ref{lem:DivAlgp=1(3)} implies that it is impossible to prove Theorem \ref{groups_with_torsion} for all primes using lattices of division algebra type. However, we can prove the following special case.

\begin{proposition}\label{prop:SpecialCase}
Suppose that $p$ is an odd prime and $p \equiv 1 \pmod{3}$. Then there are infinitely many distinct isomorphism classes of torsion-free lattices $\Gamma < \SU(2,1)$ so that $\H_\C^2 / \Gamma$ 
admits an isometric action by $\Z / p \Z$ that is not free and has only isolated fixed points.
\end{proposition}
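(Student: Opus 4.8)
The plan is to build the lattice $\Gamma$ by invoking the division-algebra construction from \S\ref{ss:arith_cx_hyp} together with a suitable finite subgroup containing an element of order $p$, and then pass to a torsion-free normal subgroup. Since $p \equiv 1 \pmod 3$, choose a cubic cyclic extension $L/\Q$ inside $\Q(\zeta_p)$ (which exists because $3 \mid p - 1$), take any CM field $E/k$ disjoint from $L$, and form the compositum $EL$, a degree-three cyclic extension of $E$ that embeds in an appropriate division algebra $A$ of degree three over $E$ with an involution of the second kind of the correct signature $(2,1)$ at the identity place and $(3,0)$ elsewhere. Inside $A$ we will locate an element $\tau$ of order $p$: concretely, $\zeta_p$ lives in $EL$, hence in $A$ via the subfield embedding $EL \hookrightarrow A$, and after scaling by a cube root of unity we may assume $\tau$ has reduced norm one, so $\tau \in \SU(h,A)$ for a well-chosen Hermitian $h$. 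Replacing the order $\mathcal{O}$ by a maximal order containing both $\tau$ and $\SU(h,\mathcal{O})$, we obtain an arithmetic lattice $\G < \SU(2,1)$ of division algebra type containing an element of order $p$; by Lemma \ref{lem:DivAlgElliptic} this element is regular elliptic, in particular it acts on $\H^2_\C$ with isolated fixed points and is not a complex reflection.

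Next I would produce the torsion-free normal subgroup. By Selberg's lemma $\G$ has a torsion-free subgroup of finite index, and intersecting its conjugates gives a torsion-free normal subgroup $\Lambda \trianglelefteq \G$; moreover, using congruence subgroups one can arrange $\Lambda$ to be a congruence subgroup, so that the finite group $F = \G/\Lambda$ is a quotient of $\G$ in which the image of $\tau$ still has order $p$ — this follows from residual finiteness applied to the cyclic group $\langle \tau \rangle$, i.e.\ choosing $\Lambda$ to avoid the finitely many nontrivial powers of $\tau$. Then $X = \H^2_\C/\Lambda$ is a compact complex hyperbolic surface (compactness is automatic for division algebra type by Remark \ref{some_remarks2}) carrying an isometric action of $F$, and the subgroup $\Z/p\Z \le F$ generated by the image of $\tau$ acts on $X$. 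By Lemma \ref{fixed-pts-tor}, this action has only isolated fixed points precisely because $\G$ — and hence the relevant preimage lattice — contains no complex reflections: indeed by Lemma \ref{lem:DivAlgElliptic} every noncentral torsion element of $\G$ is regular elliptic, so there are no complex reflections at all. The action is not free because $\tau$ itself, being elliptic, has a fixed point in $\H^2_\C$, which descends to a fixed point of the $\Z/p\Z$-action on $X$ (after possibly passing to a further finite cover we may assume $\Lambda$ is small enough that this fixed point survives rather than being identified with others under the group action, though in fact a fixed point of $\tau$ upstairs maps to a fixed point of $\langle \tau\rangle\Lambda/\Lambda$ downstairs automatically).

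To get \emph{infinitely many} isomorphism classes of such $\Gamma$, I would vary the construction: either vary the CM field $E$ and the division algebra $A$ over infinitely many commensurability classes, or, within one commensurability class, pass to the congruence subgroups $\Lambda(\mathfrak{q})$ for infinitely many primes $\mathfrak{q}$ of $R_E$ away from the ramification of $A$ and from $p$. For all but finitely many $\mathfrak{q}$ the reduction of $\tau$ modulo $\mathfrak{q}$ retains order $p$ (its characteristic polynomial is separable mod $\mathfrak{q}$ for almost all $\mathfrak{q}$, cf.\ the argument of Lemma \ref{reduce-regular}), so $\tau$ normalizes $\Lambda(\mathfrak{q})$ with image of order $p$ in the quotient; taking $\Lambda$ to be the intersection of $\Lambda(\mathfrak{q})$ with $\G$-conjugates makes it normal. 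The surfaces $\H^2_\C/\Lambda(\mathfrak{q})$ have unbounded volume, hence infinitely many are nonisomorphic as Riemannian manifolds, and in particular the lattices $\Lambda$ fall into infinitely many isomorphism classes. The main obstacle I anticipate is the first step: verifying that a degree-three cyclic extension of $E$ containing $\zeta_p$ genuinely embeds as a maximal subfield of a division algebra $A$ of the required local type — one must simultaneously control the local invariants of $A$ so that the involution of the second kind exists, the signature conditions $(2,1)$ and $(3,0)$ hold at the archimedean places, and $EL$ splits into the correct local degrees; this is a class field theory / Brauer group computation, and it is exactly here that the congruence $p \equiv 1 \pmod 3$ is needed to make $\zeta_p$ fit into a cubic (rather than higher-degree) cyclic extension.
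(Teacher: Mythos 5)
Your overall strategy (division algebra type, Lemma \ref{lem:DivAlgElliptic} plus Lemma \ref{fixed-pts-tor}, a torsion-free normal congruence subgroup, and unbounded volume to get infinitely many classes) is the same as the paper's, but the arithmetic setup at the first step has a genuine gap, and it is exactly at the point you flagged as the anticipated obstacle. You take $L$ to be the cubic cyclic subfield of $\Q(\zeta_p)$, let $E$ be \emph{any} CM field linearly disjoint from $L$, and claim that $\zeta_p$ lies in the compositum $EL$ and hence in $A$. This is false in general: $[\Q(\zeta_p):\Q] = p-1 > 3$, while $EL$ has degree $3[E:\Q]$ over $\Q$ with no reason to contain $\Q(\zeta_p)$ (indeed the cubic subfield $L$ is totally real, since $-1$ is a cube mod $p$ when $p \equiv 1 \pmod 6$, so $EL$ picks up no $p$-th roots of unity from $L$ at all). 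More structurally, $A$ contains an element of order $p$ only if the commutative field $E(\zeta_p)$ embeds in $A$ over its center $E$, which forces $[E(\zeta_p):E] \in \{1,3\}$; for a generic CM field $E$ one has $[E(\zeta_p):E] = p-1$, so there is simply no order-$p$ element in $A^*$, and the whole construction has nothing to reduce modulo congruence subgroups. Your closing sentence ("a degree-three cyclic extension of $E$ containing $\zeta_p$") describes the property you need but is inconsistent with the field $EL$ you actually built.

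The repair is the specific choice the paper makes: since $3 \mid p-1$, take $E$ to be the subfield of $L := \Q(\zeta_p)$ with $[L:E] = 3$ (so $E$ is totally imaginary abelian, hence CM, with maximal totally real subfield $k = E \cap \Q(\zeta_p + \zeta_p^{-1})$), and take $A$ to be a cyclic division algebra with center $E$ admitting an involution of the second kind and containing $L$ itself as a maximal subfield, with an order $\mathcal{O}$ containing $R_L$. Then $\zeta_p$ genuinely lies in $\mathcal{O}$, and after choosing the hermitian element with the right signatures one gets a cocompact lattice $\G$ of division algebra type containing an element of order $p$; your remaining steps (all noncentral torsion is regular elliptic by Lemma \ref{lem:DivAlgElliptic}, isolated fixed points by Lemma \ref{fixed-pts-tor}, order $p$ survives in $\G/\Lambda$ because $\Lambda$ is torsion-free, infinitude via congruence subgroups of unbounded covolume) then go through as you wrote them and agree with the paper's sketch. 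Note also that the constraint you were trying to sidestep is not an artifact: Lemma \ref{lem:DivAlgp=1(3)} shows that order-$p$ torsion in division-algebra-type lattices forces $p \equiv 1 \pmod 3$, so the center $E$ cannot be chosen freely.
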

\begin{proof}[Sketch of the proof]
Let $\zeta$ be a primitive $p^{th}$ root of unity. If $\varphi(p) = p-1$ is divisible by $3$, then $L = \Q(\zeta)$ contains a subfield $E$ so that $L / E$ is Galois of degree $3$. Then
	\[ k = E \cap \Q(\zeta + \zeta^{-1}) \]
is the maximal totally real subfield of $E$, and $E / k$ is quadratic. Thus $E / k$ is a CM-extension, since $E$ is Galois over $\Q$ and hence totally complex.

Let $A$ be a cyclic division algebra with center $E$ and involution of second kind containing $L$ as a maximal subfield. One can then construct an order $\mathcal{O}$ containing the ring of integers $R_L$ of $L$. Choosing an appropriate hermitian element of $A$ produces a lattice $\Gamma < \SU(2,1)$ so that $\zeta$ naturally lives in $\Gamma$. Thus $\Gamma$ contains an element of order $p$. Fix any normal, torsion-free subgroup $\Lambda < \Gamma$ of finite index. By Lemma \ref{lem:DivAlgElliptic}, all torsion elements of $\Lambda$ are regular elliptics, so the action of $\Gamma / \Lambda$ on $\H_\C^2 / \Lambda$ has only isolated fixed points by Lemma \ref{fixed-pts-tor}. There is an element of $\Gamma / \Lambda$ with order $p$ that has nontrivial fixed point set by construction, so this proves the proposition. 
\end{proof}

%
%
%
%
\section{Proof of Theorem \ref{there_are_isolated}}
\label{proof_main}

Theorem \ref{there_are_isolated} will be a consequence of our next stronger result.

\begin{theorem}
\label{groups_with_torsion}
For any prime $p \in \Z$, there exist infinitely many nonisomorphic cocompact torsion-free lattices $\Lambda_p^i < \SU(2,1)$, such that the quotient manifold $X_i = \H^2_\C/\Lambda_p^i$ admits an isometric action by $\Z / p \Z$ that is not free and has only isolated fixed points.\end{theorem}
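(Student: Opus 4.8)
The plan is to treat odd primes and the prime $2$ separately. For an odd prime $p$, the strategy is to build a single cocompact arithmetic lattice $\Delta<\SU(2,1)$ that contains a regular elliptic element of order $p$, cut $\Delta$ down to a torsion-free congruence subgroup $\Lambda$ in such a way that the subgroup generated by $\Lambda$ together with the order-$p$ element contains no complex reflection, and then vary the congruence level to produce infinitely many examples. Concretely: fix $m$ divisible by $p$; by Remark~\ref{some_remarks} there is an arithmetic lattice $\Delta$ of simplest type containing $S(m,p,2)$, and carrying out that construction over a totally real field $k\neq\Q$ makes $\Delta$ cocompact. We may assume $\Delta<\SL_3(R_E)$ for the associated CM field $E$. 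By Corollary~\ref{RegularElliptic}, the element $g_p$ from \eqref{gp} is a regular elliptic element of $\Delta$ of order $p$, and each $g_p^j$ with $1\le j\le p-1$ is again regular elliptic of order $p$. Since a lattice has only finitely many conjugacy classes of finite-order elements, fix representatives $\sigma_1,\dots,\sigma_r$ for those in $\Delta$ that are complex reflections.

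Next I would choose a prime ideal $\mathcal{Q}$ of $R_E$, with reduction $\rho\colon\SL_3(R_E)\to\SL_3(R_E/\mathcal{Q})$, so that (i) $\rho(g)\neq 1$ for every nontrivial finite-order $g\in\Delta$ --- it is enough to impose this on conjugacy-class representatives, and each excluded $\mathcal{Q}$ must divide a fixed nonzero entry of some $g-\Id$, so only finitely many are ruled out --- whence $\Lambda:=\ker(\rho|_\Delta)$ is torsion-free; and (ii) for all $i$ and all $1\le j\le p-1$, the images $\rho(\sigma_i)$ and $\rho(g_p^j)$ are not conjugate in $\SL_3(R_E/\mathcal{Q})$, which by Lemma~\ref{reduce-regular} (applied to the pairs $(\sigma_i,g_p^j)$) again excludes only finitely many $\mathcal{Q}$ and in addition forces $\rho(g_p^j)\neq 1$, so $\rho(g_p)$ has order $p$. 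Set $\Gamma:=\Lambda\langle g_p\rangle$. Then $\Lambda\triangleleft\Gamma$, $g_p\notin\Lambda$, and $g_p^p=1$, so $\Gamma=\Lambda\rtimes\langle g_p\rangle$ with $\Gamma/\Lambda\cong\Z/p\Z$; consequently every nontrivial finite-order element of $\Gamma$ has order $p$, since its $p$-th power lies in the torsion-free group $\Lambda$.

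The key step is showing $\Gamma$ has no complex reflection. Suppose $g\in\Gamma$ is one. Then $g$ is elliptic, hence of finite order, hence of order $p$, and $g\notin\Lambda$, so $g\Lambda=g_p^j\Lambda$ for some $1\le j\le p-1$; thus $g\,g_p^{-j}\in\Lambda=\ker\rho$ and $\rho(g)=\rho(g_p^j)$. But $g$ is conjugate in $\Delta$ to some $\sigma_i$, so $\rho(g)$ is conjugate to $\rho(\sigma_i)$ in $\SL_3(R_E/\mathcal{Q})$, and therefore $\rho(\sigma_i)$ and $\rho(g_p^j)$ are conjugate there, contradicting (ii). By Lemma~\ref{fixed-pts-tor}, the action of $\Gamma/\Lambda\cong\Z/p\Z$ on the compact manifold $\H^2_\C/\Lambda$ has only isolated fixed points, and it is not free since $g_p$ is elliptic and lies in $\Gamma\setminus\Lambda$. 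To get infinitely many examples, replace $\mathcal{Q}$ by $\mathcal{Q}^n$: the groups $\Lambda_n:=\ker\big(\Delta\to\SL_3(R_E/\mathcal{Q}^n)\big)$ are torsion-free normal subgroups of $\Delta$ with $[\Delta:\Lambda_n]\to\infty$, and $\Gamma_n:=\Lambda_n\langle g_p\rangle\subseteq\Gamma$, so the argument above applies verbatim to each (a complex reflection in $\Gamma_n$ would already be one in $\Gamma$); by Mostow--Prasad rigidity the resulting quotients, having unbounded volume, realize infinitely many isomorphism types of $\Lambda_n$. This settles the odd prime case.

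For $p=2$ the scheme above breaks down: there is no regular elliptic element of order $2$, so the non-free torsion of $\Gamma$ would have to act as a reflection through a point, and such an element has the same characteristic polynomial $(t-1)(t+1)^2$ as an order-$2$ complex reflection --- so Lemma~\ref{reduce-regular} cannot separate the two classes in any congruence quotient; moreover lattices of division algebra type have no $2$-torsion at all by Lemma~\ref{lem:DivAlgElliptic}. The plan here is instead to exhibit one explicit cocompact arithmetic lattice $\Delta$ of simplest type containing an order-$2$ reflection through a point $g$ for which one can verify directly, from the explicit Hermitian data, that the intermediate group $\Gamma=\Lambda\langle g\rangle$ --- for a suitable torsion-free congruence subgroup $\Lambda\triangleleft\Delta$ --- contains no complex reflection whatsoever; the congruence tower $\Lambda_n$ then supplies infinitely many examples exactly as before. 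Such an example is constructed in \S\ref{sec:Ex}. The main obstacle in the whole proof is precisely this last verification for $p=2$ --- excluding order-$2$ complex reflections without the aid of congruence separation --- which is why, as noted in the introduction, the existence of a suitable example is somewhat fortuitous.
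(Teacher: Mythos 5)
Your treatment of odd primes is correct and is essentially the paper's own argument: embed $S(m,p,2)$ (hence the regular elliptic $g_p$) in a cocompact simplest-type lattice over a CM field with $k\neq\Q$, use Lemma~\ref{reduce-regular} to separate the (finitely many conjugacy classes of) complex reflections from the powers $g_p^j$ in a congruence quotient, pass to the preimage $\Gamma=\Lambda\langle g_p\rangle$ of the cyclic subgroup, and invoke Lemma~\ref{fixed-pts-tor}. Your variations --- spelling out the Minkowski argument via conjugacy-class representatives, handling all powers $g_p^j$ explicitly, and producing infinitely many examples from the tower $\ker(\Delta\to\SL_3(R_E/\mathcal{Q}^n))$ plus Mostow rigidity rather than from infinitely many primes via strong approximation --- are all valid and, if anything, slightly more self-contained than the paper's appeal to \cite{Wei}. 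The only looseness there is leaning on Remark~\ref{some_remarks} for cocompactness; the paper instead constructs the form $H_\beta$ explicitly over $\Q(\zeta_p)^+$ (using $\zeta_9$ when $p=3$ to force $k\neq\Q$), and something like that explicit construction is what actually justifies your claim.

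The genuine gap is the case $p=2$, which is exactly the delicate part of the theorem, and your proposal does not prove it: it is a statement of intent (``exhibit one explicit cocompact lattice containing an order-$2$ reflection through a point for which one can verify directly that $\Lambda\langle g\rangle$ contains no complex reflection''), with no candidate lattice produced and no verification performed; moreover the pointer to \S\ref{sec:Ex} is wrong, since that section contains examples only for $p=3,5,7$. The paper's proof (\S\ref{two}) hinges on a specific input: the Deraux--Parker--Paupert lattice $\Gamma=\mathcal{S}(\overline{\sigma}_4,3)$, which is cocompact, arithmetic of simplest type with $\Gamma<\SU(H,R_E)$ for $E=\Q(\sqrt{-3},\sqrt{-7})$, contains $2$-torsion because $R_1R_2$ has order $12$, and --- crucially --- has its finite subgroups classified as the Shephard--Todd groups $G_4$ and $G_5$, in which every complex reflection has order $3$. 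That classification is what replaces the congruence separation that fails for $p=2$: every order-$2$ element of $\Gamma$ is automatically a reflection through a point, and an order-$3$ complex reflection cannot lie in the index-$2$ preimage group without landing in the torsion-free kernel, so the argument of Lemma~\ref{using_congruence} goes through. Also, your aside attributing the absence of $2$-torsion in division algebra type lattices to Lemma~\ref{lem:DivAlgElliptic} alone is imprecise; that exclusion comes from Lemma~\ref{lem:DivAlgp=1(3)} (neither $\varphi(2)$ nor $\varphi(6)$ is divisible by $3$), though this side remark is not load-bearing. Without an explicit lattice playing the role of $\mathcal{S}(\overline{\sigma}_4,3)$ and a verification of the ``no complex reflections of order $2$'' property, your $p=2$ case remains unproved.
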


The proof will separate the cases of $p$ an odd prime, and $p=2$. However, in both cases we will make use of the following lemma.

\begin{lemma}
\label{using_congruence}
Using the notation of \S \ref{ss:arith_cx_hyp}, suppose that $\Gamma$ is a subgroup of $\SU(H,R_E)$ that contains a regular elliptic element of order $p$. Then there are infinitely many nonisomorphic subgroups $\Lambda_i < \G$ of finite index so that the quotient orbifold $\H^2_\C/\Lambda_i$ admits an isometric action by $\Z / p \Z$ that is not free and has only isolated fixed points.
\end{lemma}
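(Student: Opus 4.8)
The plan is to take the subgroups $\Lambda_i$ to be principal congruence subgroups $\Gamma(\mathcal{Q}) = \ker(\Gamma \to \SL_3(R_E/\mathcal{Q}))$ for suitably chosen prime ideals $\mathcal{Q}$ of $R_E$, with the $\Z/p\Z$-action on $\H^2_\C/\Gamma(\mathcal{Q})$ coming from the image of the given regular elliptic element $\tau \in \Gamma$ of order $p$. Note that the hypothesis forces $p$ odd, since a regular elliptic element has three distinct eigenvalues, and that each power $\tau^j$ with $1 \le j \le p-1$ is again regular elliptic of order $p$. For all but finitely many $\mathcal{Q}$ the group $\Gamma(\mathcal{Q})$ is torsion-free and $\tau \notin \Gamma(\mathcal{Q})$; for such $\mathcal{Q}$ the group $\Gamma_0 := \langle \Gamma(\mathcal{Q}), \tau \rangle$ contains $\Gamma(\mathcal{Q})$ as a normal subgroup with cyclic quotient of order $p$, so $\H^2_\C/\Gamma(\mathcal{Q})$ is a complex hyperbolic $2$-manifold carrying an isometric $\Z/p\Z$-action which is not free (the elliptic $\tau$ fixes a point of $\H^2_\C$, which descends to a fixed point of the nontrivial element $\tau\Gamma(\mathcal{Q})$). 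By Lemma \ref{fixed-pts-tor}, everything then reduces to arranging that $\Gamma_0$ contains no complex reflection.

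The key step is to separate complex reflections from $\tau$ and its powers in congruence quotients. First I would observe that any complex reflection in $\Gamma$ is torsion: it fixes a point of $\H^2_\C$, and point stabilizers in $\SU(2,1)$ are compact, so a discrete subgroup fixing a point is finite. Since $\Gamma$ is a lattice it has only finitely many conjugacy classes of finite subgroups, hence only finitely many conjugacy classes of complex reflections; fix representatives $\sigma_1, \dots, \sigma_r$. For each pair $(i,j)$ with $1 \le i \le r$ and $1 \le j \le p-1$, Lemma \ref{reduce-regular} applied to the complex reflection $\sigma_i$ and the regular elliptic $\tau^j$ rules out only finitely many $\mathcal{Q}$. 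Let $\mathcal{S}$ be the union of these finitely many sets together with the finitely many $\mathcal{Q}$ for which $\Gamma(\mathcal{Q})$ fails to be torsion-free or $\tau \in \Gamma(\mathcal{Q})$, and choose $\mathcal{Q} \notin \mathcal{S}$. If $\Gamma_0 = \langle \Gamma(\mathcal{Q}), \tau\rangle$ contained a complex reflection $g$, then $g$ would be torsion, hence $g \notin \Gamma(\mathcal{Q})$, so $g \in \Gamma(\mathcal{Q})\tau^j$ for some $1 \le j \le p-1$ and its reduction modulo $\mathcal{Q}$ equals that of $\tau^j$; but $g$ is $\Gamma$-conjugate to some $\sigma_i$, so the reductions of $\sigma_i$ and $\tau^j$ would be conjugate in $\SL_3(R_E/\mathcal{Q})$, contradicting $\mathcal{Q} \notin \mathcal{S}$. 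So $\Gamma_0$ has no complex reflection, and Lemma \ref{fixed-pts-tor} gives that the $\Z/p\Z$-action has only isolated fixed points.

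Finally, to get infinitely many pairwise non-isomorphic $\Lambda_i$, I would let $\mathcal{Q}$ range over the infinitely many primes outside $\mathcal{S}$. Since $\Gamma$ is an infinite, finitely generated group whose congruence subgroups intersect trivially, the indices $[\Gamma : \Gamma(\mathcal{Q})]$ are unbounded, so one can pick $\mathcal{Q}_1, \mathcal{Q}_2, \dots$ with $[\Gamma : \Gamma(\mathcal{Q}_i)]$ strictly increasing and set $\Lambda_i = \Gamma(\mathcal{Q}_i)$. An abstract isomorphism $\Lambda_i \cong \Lambda_{i'}$ would, by Mostow--Prasad rigidity, be induced by an isometry $\H^2_\C/\Lambda_i \to \H^2_\C/\Lambda_{i'}$, forcing equal volumes and hence $[\Gamma : \Lambda_i] = [\Gamma : \Lambda_{i'}]$; so the $\Lambda_i$ are pairwise non-isomorphic. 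I expect the only real subtlety to be the order of quantifiers in the middle step: the exceptional set $\mathcal{S}$ must be pinned down using the a priori \emph{finite} list of conjugacy classes of complex reflections in $\Gamma$ before $\mathcal{Q}$ is chosen, precisely because the offending complex reflection $g$ in $\Gamma_0$ is not known in advance. The rest is standard.
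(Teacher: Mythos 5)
Your proof is correct and follows essentially the same route as the paper: take $\Lambda_i$ to be principal congruence kernels, use Lemma \ref{reduce-regular} to rule out complex reflections in the preimage of $\langle \widehat{\tau} \rangle$, and conclude via Lemma \ref{fixed-pts-tor}. The only differences are minor but welcome: you make explicit the reduction to finitely many conjugacy classes of complex reflections and the need to separate all powers $\tau^j$, $1 \le j \le p-1$, from reflections (points the paper leaves implicit), and you replace the paper's appeal to strong approximation (used there to make the congruence images, hence the covolumes, grow) by the elementary observation that congruence kernels of a finitely generated group intersect trivially, so the indices are unbounded, combined with Mostow--Prasad rigidity to conclude the $\Lambda_i$ are pairwise nonisomorphic.
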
 

\begin{proof} Fix an elliptic element $\tau \in \Gamma$ of order $p$ as in the hypothesis. By Lemma \ref{reduce-regular}, there are infinitely many prime ideals $\mathcal{Q}_i \subset R_E$ so that the image of $\tau$ is nontrivial modulo $\mathcal{Q}_i$ and no complex reflection in $\G$ has reduction that is $\SL_3(R_E / \mathcal{Q}_i)$-conjugate to the reduction of $\tau$. Let
\[
\rho_i : \G \to \SL_3(R_E / \mathcal{Q}_i)
\]
be the reduction homomorphism, $\Lambda_i$ be its kernel, and $\widehat{\tau}_i = \rho_i(\tau)$, which is an element of order $p$ since $\tau$ has prime order $p$ in $\G$.

Note that $\Lambda_i$ is torsion-free for all but finitely many $\mathcal{Q}_i$ by a classical result of Minkowski, so we can assume that $X_i = \H^2_\C / \Lambda_i$ is a compact complex hyperbolic manifold for all $i$. Moreover, since Strong Approximation holds for 
the ambient algebraic group $\SU(H)$, the reduction homomorphism is onto for all but finitely primes $\mathcal{Q}_i$ \cite{Wei}. Since the $R_E$ only contains finitely many prime ideals of bounded norm, the reductions modulo $\mathcal{Q}_i$ then have order going to infinity, i.e., the $X_i$ have volume going to infinity and hence are distinct.

Then $\widehat{\tau}_i$ defines an isometry of $X_i$ with order $p$. Since $\Lambda_i$ is torsion-free, no complex reflection in $\G$ is in $\Lambda_i$. It was also already noted that no complex reflection can have reduction conjugate into
	\[ F_i = \langle \widehat{\tau}_i \rangle \cong \Z / p \Z \]
inside $\G / \Lambda_i$. Thus $\G_i = \rho_i^{-1}(F_i) < \G$ contains no complex reflections. Then $F_i$ acts on $X_i$ with isolated fixed points by Lemma \ref{fixed-pts-tor}, so the proof of the lemma is complete.
\end{proof}

\subsection{Odd primes}
\label{odd}
For the case of $p$ an odd prime, we will construct a lattice $\Gamma_p < \SU(2,1)$ of simplest type containing a regular elliptic element of order $p$. Consider a $p^{th}$ root of unity $\zeta_p$, set $E = \Q(\zeta_p)$, let $k$ be its maximal totally real subfield, and consider the identity embedding of $k$ sending a generator $\zeta_p + \zeta_p^{-1}$ to $2 \cos(2 \pi / p)$.

Then $k$ has degree $d = \frac{p-1}{2}$ over $\Q$, and it is a classical fact that the various real embeddings of $k$ embed $R_k$ as a lattice in $\R^d$. In particular, there is an element $\beta \in R_k$ so that $\beta < 0$ under the identity embedding of $k$ and $\nu(\beta) > 0$ for all other embeddings $\nu$ of $k$. Thus the hermitian form $H_\beta$ given by $|z_1|^2 + |z_2|^2 + \beta |z_3|^2$ defines an arithmetic lattice of simplest type.

\begin{lemma}\label{hasgp}
The lattice $\SU(H_p, R_E)$ defined in this section contains a regular elliptic of order $p$.
\end{lemma}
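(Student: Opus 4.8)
The plan is to exhibit a single explicit regular elliptic element of order $p$ inside $\SU(H_p, R_E)$, mimicking the role that $g_p$ played in the simplest-type discussion but now adapted to the Hermitian form $H_\beta = |z_1|^2 + |z_2|^2 + \beta|z_3|^2$. First I would observe that the diagonal matrix
	\[ g = \begin{pmatrix} \zeta_p & 0 & 0 \\ 0 & \zeta_p^{-1} & 0 \\ 0 & 0 & 1 \end{pmatrix} \]
has entries in $R_E = \Z[\zeta_p]$, has determinant $1$, and satisfies $g H_\beta \overline{g}^t = H_\beta$ because $H_\beta$ is diagonal and $|\zeta_p| = |\zeta_p^{-1}| = 1$ under the identity embedding. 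Hence $g \in \SU(H_\beta) \cap \SL_3(R_E) = \SU(H_p, R_E)$, and it visibly has order $p$.

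Next I would check that $g$ is regular elliptic. Its three eigenvalues are $\zeta_p$, $\zeta_p^{-1}$, and $1$; since $p$ is an odd prime these are pairwise distinct (one cannot have $\zeta_p = \zeta_p^{-1}$, $\zeta_p = 1$, or $\zeta_p^{-1} = 1$ for $\zeta_p$ a primitive $p$th root of unity with $p > 2$). Because $\SU(H_\beta, \R)$ is conjugate in $\GL_3(\C)$ to $\SU(2,1)$ via a change of basis taking $H_\beta$ to the standard signature-$(2,1)$ form, and conjugation preserves eigenvalues and the signature of eigenspaces, $g$ is sent to an element of $\SU(2,1)$ with three distinct eigenvalues, i.e., a regular elliptic element in the sense of \S\ref{orbifolds}. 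It then has a unique fixed point in $\H^2_\C$ with nonscalar cyclic action on the tangent space, as required.

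I expect essentially no obstacle here: the only mild point to articulate is that $g$ indeed lies in the correct arithmetic group, which comes down to the diagonality of $H_\beta$ together with $\zeta_p \overline{\zeta_p} = 1$, and that the signature/regularity statement transfers through the conjugation used to put $H_\beta$ in standard form. Alternatively, and perhaps more cleanly, one can invoke Corollary \ref{RegularElliptic}: since $g$ (viewed appropriately) generates a copy of a cyclic subgroup of $S(m,p,2)$ for suitable $m$ — or more directly, since the argument of Lemma \ref{still-a-reflection} and Corollary \ref{RegularElliptic} applies verbatim to the diagonal embedding regardless of which diagonal Hermitian form one uses — the element has regular elliptic action on $\H^2_\C$. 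Either way the lemma follows, and combined with Lemma \ref{using_congruence} it yields the odd-prime case of Theorem \ref{groups_with_torsion}.
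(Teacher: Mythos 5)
Your proof is correct and is essentially the paper's argument: your explicit matrix $\mathrm{diag}(\zeta_p,\zeta_p^{-1},1)$ is exactly the image of the element $g_p$ from Equation \eqref{gp} under the embedding \eqref{Gembed}, and the paper reaches the same conclusion by noting that $G(p,p,2)$ preserves the diagonal form $H_\beta$, so that $S(p,p,2) \subset \SU(H_\beta,R_E)$ and Corollary \ref{RegularElliptic} applies. The only point worth phrasing carefully is that the verification $gH_\beta\overline{g}^t = H_\beta$ should be read as the algebraic identity $\zeta_p\overline{\zeta_p}=1$ in $E$ (as you note), and that regular ellipticity uses both the distinct eigenvalues and the fact that $g$ has finite order, hence is elliptic.
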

\begin{proof}
Under the embedding from Equation \eqref{Gembed}, the group $G(p,p,2)$ also preserves the hermitian form $H_\beta$, and thus $S(p,p,2)$ defines a subgroup of $\SU(H_\beta, R_E)$ containing a regular elliptic of order $p$ by Corollary \ref{RegularElliptic}.
\end{proof}

\begin{proof}[Proof of Theorem \ref{groups_with_torsion} for $p$ odd]
If $p > 3$, then set $\G = \SU(H_\beta, R_E)$. This contains a regular elliptic element of order $p$ by Lemma \ref{hasgp}, and is cocompact since $k \neq \Q$. By Lemma \ref{using_congruence}, this proves Theorem \ref{groups_with_torsion}. When $p = 3$, the lattice is not cocompact, so an adjustment is necessary. Replacing $\zeta_3$ with $\zeta_9$, one still finds $S(3,3,2)$ inside $\SU(H_\beta, R_E)$, which is now cocompact since $k \neq \Q$, so the theorem again follows.
\end{proof}

\subsection{p=2}
\label{two}
To handle this case, we will make use of an example from \cite{DPP1} (see Theorem 1.2 therein with $p=3$). This example is discussed in \cite{DPP2} in more detail, where it is the group denoted by $\mathcal{S}(\overline{\sigma}_4,3)$. Henceforth we will denote this group by $\Gamma$, and note that $\G$ is cocompact. A presentation for $\G$ is:
\[ \G =\! \left\langle R_1, R_2, R_3, J~|~R_1^3, (R_1J)^7, J^3, JR_1J^{-1}=R_2, JR_2J^{-1}=R_3, R_1R_2R_1R_2=R_2R_1R_2R_1 \right\rangle\!. \]
%
%
%
See \cite[Tb.\ 5.1]{DPP2} and note that, as stated after Table 5.1, the last two relations there can be omitted. Although not visible in the presentation given above, the element $R_1R_2$ has order $12$ (again, see \cite[Tb.\ 5.1]{DPP2}), and so in particular there is an element in $\G$ of order $2$.

As shown in \cite[Thm.~1.1]{DPP1}, $\Gamma$ is arithmetic of simplest type with:
\begin{align*}
k&=\Q(\sqrt{21}) & E&=\Q(\sqrt{-3},\sqrt{-7})
\end{align*}
The fact that $\Gamma$ is simplest type can be deduced from \cite[\S 6]{DPP1}, and also follows from \cite[Thm.\ 1.4]{Sto} using the existence of a complex reflection in $\Gamma$. As discussed in \cite[\S 6.1]{DPP1}, $\Gamma$ can be conjugated to preserve a Hermitian form $H$ whose entries are algebraic integers in the field $E$ such that matrices representing $R_1$ and $J$ also have entries in $E$. From the descriptions of $R_1$ and $J$ given in \cite[p.\ 7]{DPP2} we see these matrices have determinant 1, and so from the presentation for $\Gamma$ we deduce that, since $R_2$ and $R_3$ are conjugates of $R_1$, they also have determinant $1$. Thus we can assume that $\Gamma < \SU(H,R_E)$.

Then \cite[\S 4.3.2 \& Appx.]{DPP2} classifies the finite subgroups of $\G$, which are the complex reflection groups denoted $G_4$ and $G_5$ in the standard Shephard--Todd notation. Consulting \cite[Appx.\ D]{URG}, all complex reflections in these groups have order $3$. Thus any element of order $2$ in $\Gamma$ is a reflection through a point. Applying the exact same argument as in Lemma \ref{using_congruence} now produces manifolds $X_i$ with a $\Z / 2 \Z$ action having a nonempty set of isolated fixed points. This completes the case $p = 2$, and thus completes the proof of Theorem \ref{groups_with_torsion}. \qed

%
%
%
%
\section{Examples}
\label{sec:Ex}

This section catalogues some examples for small odd primes.

\subsection{Fake projective planes}

Cartwright and Steger's 
completion of the classification of fake projective planes showed that they are all of division algebra type. They have Euler characteristic $3$, which implies by Chern--Gauss--Bonnet that they are minimal volume manifolds. Thus any automorphism has fixed points, and they are isolated by Lemma \ref{lem:DivAlgElliptic} and Lemma \ref{fixed-pts-tor}. The automorphism groups of fake projective planes are also classified, and one sees actions of $\Z/p\Z$ for $p = 3,7$ \cite{Keum}.

\subsection{The Cartwright--Steger surface}

The Cartwright--Steger surface \cite{CS} is a minimal-volume compact complex hyperbolic $2$-manifold, and it admits an action of $\Z / 3 \Z$ (e.g., see \cite[\S 5]{MSHurwitz}). One can check directly that all torsion elements are reflections through points and regular elliptics, so the action has isolated fixed points. In contrast with fake projective planes, this example is arithmetic of simplest type. Moreover, one can show that some of the $3$-torsion associated with isolated fixed points of the $\Z / 3 \Z$ action are reflections through points. Thus this example is of a slightly different kind than those produced using the methods of this paper.

\subsection{An example for $p = 5$}

For the pair $E = \Q(\zeta_5)$ and $k = \Q(\sqrt{5})$, one of the famous lattices $\G_{\Sigma \mu}$ constructed by Deligne and Mostow \cite{DM} ends up being $\PU(h, R_k)$ for the appropriate $h$. If $\mathfrak{p}_5$ is the prime dividing $5$ and $\G(\mathfrak{p}_5)$ is the congruence subgroup of $\G_{\Sigma \mu}$ of level $\mathfrak{p}_5$, then all torsion subgroups of $\G(\mathfrak{p}_5)$ are isomorphic to $(\Z / 5 \Z)^2$ generated by complex reflections. See \cite[Prop.\ 3.12]{AgolS} and the surrounding discussion.

Note that each $(\Z / 5 \Z)^2$, while generated by complex reflections, contains regular elliptic elements of order $5$. The congruence subgroup $\G(\mathfrak{p}_5^2)$ of level $\mathfrak{p}_5^2$ is torsion-free, and thus admits an isometric action of $\Z / 5 \Z$ with nonempty set of isolated fixed points by Lemma \ref{reduce-regular} and the same argument as the proof of Lemma \ref{hasgp}. One can show using known formulas for Euler characteristics of Deligne--Mostow orbifolds that $\H_\C^2 / \G(\mathfrak{p}_5^2)$ has Euler characteristic $1875$ (cf.\ Question \ref{vol} below).

%
%
%
%
\section{Final comments}
\label{final}

As mentioned in the introduction, it would be interesting to understand the singular complex hyperbolic surfaces of small volume. With Lemma \ref{fixed-pts-tor} in mind, we say that $X = \H_\C^2 / \G$ is a \emph{singular complex hyperbolic surface} if all torsion in $\G$ is reflections through points and regular elliptics. For each prime $p$, Theorem \ref{there_are_isolated} produces closed singular complex hyperbolic surfaces for which each singular point is a cyclic quotient singularity of order $p$. We expect that the methods of this paper can also produce similar examples with $p$ replaced with any natural number $n > 1$.

\begin{question}\label{vol}
Fix a prime $p$.
\begin{enumerate}
\item What is the smallest volume closed singular complex hyperbolic surface for which all singular points have order $p$?
\item What is the smallest volume closed complex hyperbolic manifold admitting an action of $\Z / p \Z$ with isolated fixed points?
\end{enumerate}
\end{question}

\begin{remark}\label{CS}
Fake projective planes and the Cartwright--Steger surface answer Question \ref{vol}(2) for $p = 3$ and $p = 7$. See \S \ref{sec:Ex}.
\end{remark}

For large $p$, the methods of this paper fail for noncompact arithmetic lattices, i.e., the \emph{Picard modular groups}. Indeed, these are contained in $\SL_3(R_d)$, where $R_d$ is the ring of integers of $\Q(\sqrt{-d})$. Independent of $d$, these groups can only have torsion of uniformly bounded order. Indeed, by Cayley--Hamilton their eigenvalues are roots of unity with bounded degree over $\Q$.

Moving to nonarithmetic lattices, there are only finitely many commensurability classes known. Moreover, there is a universal bound for the torsion orders in any commensurability class since commensurability classes of nonarithmetic lattices have a unique maximal element for inclusion \cite[p.4]{Margulis}. Thus, the following question seems extremely difficult, since it would require constructing infinitely many new commensurability classes of nonarithmetic lattices.

\begin{question}\label{GoAheadTry}
For each prime $p$, is there a noncompact finite-volume complex hyperbolic $2$-manifold $Y_p$ admitting an action of $\Z / p \Z$ with only isolated fixed points?
\end{question}

Lastly, the difficulty in the case $p = 2$ lies in the fact that we cannot distinguish between a complex reflection of order $2$ and a reflection through a point with order $2$ in congruence quotients. Recall that a group $\G$ is \emph{conjugacy separable} if for all nonconjugate pairs of elements $\sigma, \tau \in \G$, there is a finite quotient $F$ of $\G$ where the images of $\sigma$ and $\tau$ are not conjugate. Further, $\G$ is \emph{conjugacy separable on torsion} if distinct conjugacy classes of torsion elements remain nonconjugate  in some finite quotient. A positive answer to the following question would easily give many more examples satisfying the conclusion of Theorem \ref{there_are_isolated}, give a robust approach to constructing examples for $\Z / d \Z$ with $d$ not prime, not to mention the higher-dimensional analogues of the questions studied in this paper.

\begin{question}\label{conj-sep}
Are complex hyperbolic lattices conjugacy separable? Are they conjugacy separable on torsion?
\end{question}

Many fundamental groups of real hyperbolic manifolds, and in fact the broad class of virtually compact special hyperbolic groups, are known to be conjugacy separable \cite{Pavel}. However, we do not know a single complex hyperbolic lattice (other than the Fuchsian groups in $\SU(1,1)$, of course) that is conjugacy separable.

%
%
%
%

%
%
%
%
%

\medskip

\noindent Department of Mathematics,\\ Rice University,\\ Houston, TX 77005.\\ 
\noindent School of Mathematics,\\ Korea Institute for Advanced Study (KIAS),\\ Seoul, 02455, Korea.\\
\noindent Email:~alan.reid@rice.edu\\[\baselineskip]
\noindent Department of Mathematics,\\ Temple University,\\ Philadelphia, 
PA 19122.\\
\noindent Email:~mstover@temple.edu\\[\baselineskip]

\end{document}